\newtheorem{theorem}{Theorem}[section]
\newtheorem{lemma}[theorem]{Lemma}
\newtheorem{proposition}[theorem]{Proposition}
\theoremstyle{definition}
\newtheorem{assumption}{Assumption}[section]
\theoremstyle{remark}
\newtheorem{remark}{Remark}[section]
\newcommand{\subalign}[1]{%
  \vcenter{%
    \Let@ \restore@math@cr \default@tag
    \baselineskip\fontdimen10 \scriptfont\tw@
    \advance\baselineskip\fontdimen12 \scriptfont\tw@
    \lineskip\thr@@\fontdimen8 \scriptfont\thr@@
    \lineskiplimit\lineskip
    \ialign{\hfil$\m@th\scriptstyle##$&$\m@th\scriptstyle{}##$\crcr
      #1\crcr
    }%
  }
}
\newcommand\bR{\mathbb{R}}
\newcommand\cE{\mathcal{E}}
\newcommand\cF{\mathcal{F}}
\newcommand\cH{\mathcal{H}}
\newcommand\cK{\mathcal{K}}
\newcommand\cP{\mathcal{P}}
\newcommand\cB{\mathcal{B}}
\newcommand\cT{\mathcal{T}}
\newcommand\cU{\mathcal{U}}
\newcommand\cV{\mathcal{V}}
\newcommand{\E}{\mathbb{E}}
\newcommand{\R}{\mathbb{R}}
\newcommand{\N}{\mathbb{N}}
\newcommand{\bone}{\mathbf{1}}
\newcommand{\bB}{\mathbf{B}}
\begin{document}
\title[Feynman-Kac formula for SPDEs]{A Feynman-Kac formula for stochastic Dirichlet problems}

\author[M. Gerencs\'er]{M\'at\'e Gerencs\'er}
\address[M. Gerencs\'er]{Institute of Science and Technology, Austria}
\email{mate.gerencser@ist.ac.at}

\author[I. Gy\"ongy]{Istv\'an Gy\"ongy}
\address[I. Gy\"ongy]{University of Edinburgh, United Kingdom}
\email{i.gyongy@ed.ac.uk}

\keywords{Stochastic PDEs; Dirichlet boundary condition; Method of characteristics}
\subjclass[2010]{60H15; 35K20; 65M25}

\begin{abstract}
A representation formula for solutions of stochastic partial differential equations with Dirichlet boundary conditions is proved. The scope of our setting is wide enough to cover the general situation when the backward characteristics that appear in the usual formulation are not even defined in the It\^o sense.
\end{abstract}

\maketitle

\section{Introduction}
The goal of the article is to present a Feynman-Kac formula for the solutions of stochastic partial differential equations (SPDEs). For deterministic PDEs such a probabilistic interpretation of the solution proved to be a remarkably useful tool to prove results that are either not available or are rather more difficult to obtain by purely analytic methods. It is hence not an unreasonable hope that a representation formula can also help in the stochastic case to obtain further information about the solutions. To indicate why obtaining Feynman-Kac formulae for SPDEs is not straightforward, let us recall a simple deterministic case. Take the 1-dimensional stochastic differential equations, parametrized by $t$ and $x$,
\begin{equation}\label{eq:intro X}
dX_s^{t,x}=\sigma_s(X_s^{t,x})\, \hat d B_s\quad\text{for }s\in[0,t],\quad X_{t}^{t,x}=x,
\end{equation}
where $B$ is a standard Wiener process and $\hat d B_s$ is its backward It\^o differential. The solution $X$ - or rather its continuous modification in $s,t,x$ - is often referred to as the backward characteristic. Under some mild conditions on $\sigma$ and $\psi$, $u_t(x):=\E\psi(X_0^{t,x})$ satisfies the Cauchy problem
$$
\partial_t u_t(x)=\tfrac{1}{2}\sigma^2_t(x)\Delta u_t(x),\quad u_0(x)=\psi(x).
$$
Now if we start from an initial value problem for SPDEs, in general - and in particular for the important example of the Zakai equation - the coefficients will be random and adapted to a forward filtration. Since in \eqref{eq:intro X} the noise evolves in reversed time, it becomes an equation in which the direction of the randomness in the coefficients and that of the noise do not match: the interpretation of a solution of such an equation and the subsequent analysis needed to prove the validity the formula is problematic.

When the equation is given on the whole space, this difficulty can be overcome by an elegant argument through fully degenerate SPDEs, see \cite{K_Kindof}, and one obtains a representation in which the role of the backward flows are taken over by spatial inverses of forward flows. While this gives some idea how a representation should look like when the equation is considered with some boundary conditions, the argument itself breaks down: the Dirichlet problem for degenerate equations is ill-posed. Here we take a more pragmatic approach and `build up' the representation formula from situations where the coefficients are deterministic and one can make sense of the backward characteristics. We note that the case of deterministic coefficients in a simplified setting were considered previously in \cite{FlanSchaum}, and indeed the first step in our proof is quite similar to that in \cite{FlanSchaum}, whose method in turn is based on \cite{KR_Char}.

As an application of the formula, we get an estimate the `localization' error one makes when imposing artificial boundary conditions to problems that are originally given on the whole space. The reason why this is of interest is that often the particular model that one wants to study, and gets the equation from, has no natural boundary conditions but is expected to vanish at infinity. One then may think then that setting the value to be zero on the boundary of a large enough domain is a good approximation of the original problem, and this is what we confirm and make precise below. 

The article is structured as follows. We continue with introducing some notations, after which in Section \ref{sec:formulation} the necessary objects for the Feynman-Kac formula are introduced and in Theorem \ref{thm:rep} the representation formula is stated. In Section \ref{sec:Preliminaries}, we collect some auxiliary results, and in Section \ref{sec:proof} we give the proof of Theorem \ref{thm:rep}. Section \ref{sec:loc} contains the above mentioned application for the localization error.

\subsection*{Notations}
Given a $d$-dimensional stochastic differential equation (SDE),
\begin{equation}\label{eq:flows explanation}
dX^i_t=\alpha^i_t(X_t)\,dt+\sum_k\beta^{ik}_t(X_t)\,dw^k_t,\quad i=1,2,...,d
\end{equation}
driven by a (possibly infinite) sequence of Wiener processes, the corresponding stochastic flow on an interval $[0,T]$ is a continuous random field $(X_{s,t}(x))_{0\leq s\leq t\leq T,x\in\R^d}$ such that for all $s$ and $x$, the process $(X_{s,t}(x))_{ s\leq t\leq T}$ is a solution of the equation \eqref{eq:flows explanation} with initial condition $X_{s,s}(x)=x$, and that furthermore for all $0\leq s\leq t\leq v\leq T$ and $x\in\R^d$,
$$
X_{t,v}(X_{s,t}(x))=X_{s,v}(x).
$$
When emphasizing the direction of the equation, one may also refer to it as the \emph{forward} flow, distinguishing it from \emph{backward} flows, which are the analogous objects for equations involving backward It\^o differentials. The existence of stochastic flows is known in quite large generality, see \cite{Kunita_Book97}, \cite{Kunita_StFleur}. Moreover, also under quite general assumptions, the mappings $X_{s,t}$ are diffeomorphisms from $\R^d$ onto itself, and hence one can also talk about the \emph{inverse} flow $(X_{s,t}^{-1}(x))_{0\leq s\leq t\leq T,x\in\R^d}$. 

The derivative of a function $f$ on $\R^d$ with respect to $x^i$ is denoted by $D_i$. We denote by $C^0$ the space of continuous functions, and by $C^\alpha$ the space of H\"older continuous functions with exponent $\alpha\in(0,1)$. For $\alpha\in[1,\infty)$, the space $C^\alpha$ consists of functions $v$ such that $D_lv\in C^{\alpha-\lfloor \alpha\rfloor}$ for all multiindex $l$ with length at most $\lfloor\alpha\rfloor$. For $p\geq2$, $L_p$ denotes the usual Lebesque space of generalized functions integrable to the $p$-th power, and $W^m_p$ the Sobolev space of generalized functions from $L_p$ whose distributional partial derivatives up to order $m$ are also generalized functions from $L_p$. When talking about an infinite sequence of functions $g=(g^k)_{k\in\N}$ belonging to a function space $C^\alpha$ or $W^m_p$, we always understand $g\in C^{\alpha}(l_2)$ or $g\in W^m_p(l_2)$, respectively. For a probability space with a product measure $P\otimes \hat P$, the notation $\E^{\hat P}$ will be used for integrating out with respect to the measure $\hat P$. The symbol $\E$ denotes integrating out all the random elements, in particular, in the previous situation of a product probability measure, $\E\E^{\hat P}X=\E X$ for integrable random variables $X$. Unless it is indicated 
otherwise, the 
summation convention with respect to repeated indices is used throughout the paper. 

\section{Formulation and main result}\label{sec:formulation}
Let $D\subset\R^d$ be a bounded $C^2$-domain, $(\Omega,\cF,P)$ be a complete probability space, $(\cF_t)_{t\geq0}$ be a filtration, and $(w^k_t)_{t\geq0, k=1,2,\ldots}$ be a sequence of independent $(\cF_t)$-Wiener martingales. The filtration is assumed to satisfy 
the ``the usual conditions", i.e., $\cF_0$ contains every event of probability zero, and 
$\cF_t=\cap_{t<s}\cF_s$. The predictable $\sigma$-algebra on $[0,\infty)\times\Omega$ is denoted 
by $\cP$.

We consider the following initial-boundary value problem
\begin{equation}							\label{eq:main Dirichlet}
\left\{\begin{array}{ll}
        du=[Lu+f]\,dt+[M^ku+g^k]\,dw^k_t\quad & \text{on } [0,T]\times D,\\
        u=0, & \text{on } (0,T]\times\partial D,\\
        u_0=\psi,
        \end{array}\right.
\end{equation}
where the differential operators $L$ and $M$ are given by
\begin{align*}
L\varphi&=\tfrac{1}{2}(\sigma\sigma^*+\rho\rho^*)^{ij}D_iD_j\varphi+b^iD_i\varphi+c\varphi,\\
M^k\varphi&=\sigma^{ik}D_i\varphi+\mu^k\varphi,
\end{align*}
with coefficients $\rho,b,c,\sigma,\mu$, and initial and free data $\psi,f,g$, 
defined for $(t,\omega,x)$ from $[0,\infty)\times\Omega\times\R^d$, 
such that they vanish for $x\notin D^{1}:=\{x:d(x,D)\leq 1\}$.
They are subject to the following assumptions, for some $\alpha>0$.

\begin{assumption}							\label{assumption coercivity}
There exists a $\lambda>0$ such that for all $t$, $\omega$, and $x\in D^{1/2}$, 
$$
(\rho\rho^\ast)_t(x)\geq\lambda I
$$
in the sense of positive semidefinite matrices, where $I$ is the identity matrix, and $\rho^*$ is the transpose of $\rho$.
\end{assumption}

\begin{assumption}                     \label{assumption regularitycoeff}
The coefficients $\rho,\sigma,b,c,\mu$ are predictable functions with values in $C^{2+\alpha}(\R^{d\times d})$, $C^{2+\alpha}((l_2)^d)$, $C^{1+\alpha}(\R^d)$, $C^{\alpha}(\R)$, and $C^{1+\alpha}(l_2)$, respectively, bounded uniformly in $t$ and $\omega$ by a constant $K$.
\end{assumption}

\begin{assumption}                       \label{assumption regularitydata}
The initial value, 
$\psi$ is an $\cF_0$-measurable random variable 
with values in $C^\alpha$. The free data, $f$ and $g$, are predictable processes with values in $C^{\alpha}$ and $C^{1+\alpha}(l_2)$, respectively, such that 
$$
\E\Big(|\psi|^2_{C^\alpha}+\int_0^T |f_t|^2_{C^\alpha}+|g_t|^2_{C^{1+\alpha}}\,dt\Big)\leq K.
$$
\end{assumption}

The above assumptions are more than sufficient to get from the general solution theory of SPDEs on domains in \cite{Kim_Lp} that the problem \eqref{eq:main Dirichlet} admits a unique solution $u$ in the following sense: $u$ belongs to $L_2(\Omega,C([0,T],L_2(D)))\cap L_2([0,T]\times\Omega,\cP,H^1_0(D))$, the equality
\begin{align*}
(u_t,\varphi)=(\psi,\varphi)+&\int_0^t(a^{ij}D_j u_s,-D_i\varphi)+((b^i+D_ja^ij)D_iu+cu_s+f,\varphi)\,ds\\
&
+\int_0^t(\sigma^{ik}D_iu_s+\mu^k u_s,\varphi)\,dw^k_s
\end{align*}
holds for all $\varphi$ smooth and compactly supported function on $D$ almost surely for all $t\in[0,T]$, and $(u_t(x))_{t\in[0,T],x\in D}$ is a continuous random field. Here $(\cdot,\cdot)$ denotes the $L_2(\R^d)$ inner product.

To introduce the representation of the solution $u$, let $(\hat w_t^{r})_{t\in[0,T];r=1,\ldots,d}$ be the $d$-dimensional Wiener process on the standard Wiener space $(\hat\Omega,\hat\cF,\hat P)$, where $\hat \Omega=C([0,T],\R^{d})$, $\hat\cF=\cB(\hat\Omega)$, and $\hat P$ is the Wiener measure. The associated forward characteristics to the problem \eqref{eq:main Dirichlet} are given by the SDE,
\begin{equation}            						 \label{flow}
dY_t=\beta_t(Y_t)\,dt-\sigma_t^k(Y_t)\,dw^k_t
-{\rho}^r_t(Y_t)\,d\hat{w}^{r}_t,
\end{equation} 
on the completion of the probability space $(\Omega\times\hat\Omega,\cF\otimes\hat\cF,P\otimes\hat P)$ where for $t\in[0,T],\,y\in\bR^d$,
$$
\beta_t(y)=-b_t(y)+\sigma^{ik}_{t}(y)D_i\sigma^k_t(y)
+\rho^{ri}_t(y)D_i\rho^r_t(y)+\sigma^k_t(y)\mu^k_t(y), 
$$
and $\sigma^k$, $\rho^r$ stand for the column vectors $(\sigma^{1k},\ldots,\sigma^{dk})$, $(\rho^{1r},\ldots,\rho^{dr})$, respectively. We shall also use the notation $\bar P=P\otimes \hat P$. Taking the stochastic flow $(Y_{s,t}(y))_{0\leq s\leq t\leq T,y\in\R^d}$ defined by \eqref{flow}, one can define the random times, for $t\in[0,T]$, $x\in\R^d$
\begin{equation}\label{eq:gamma}
\gamma_{t,x}=\sup\{s\in[0,t]:(s,Y_{s,t}^{-1}(x))\notin(0,T]\times D\},
\end{equation}
that is, the exit time of the inverse characteristic starting from $t,x$. Note however, that $\gamma$ is \emph{not} a stopping time in general with respect to either of the forward or backward filtrations.

Finally, introduce the processes $\eta$ and $U$ by
\begin{align}                                                      
d\eta_t(y)=&\bar c_t(Y_{0,t}(y))\eta_t(y)\,dt
+\mu^k_t(Y_{0,t}(y))\eta_t(y)\,dw^k_t, 
\quad\eta_0(y)=1,
\label{eq:rho}
\\  
dU_t(y)=&\{\bar c_t(Y_{0,t}(y))U_t(y)+\bar f_t(Y_{0,t}(y))\}\,dt
\nonumber
\\
&\quad+\{\mu^k_t(Y_{0,t}(y))U_t(y)+g^k_t(Y_{0,t}(y))\}\,dw^k_t,
\quad\quad U_0(y)=0\label{eq:U}
\end{align}
where 
$$
\bar c_t(x):=c_t(x)-\sigma^{ki}_t(x)D_i\mu^k_t(x), 
\quad 
\bar f_t(x)=f_t(x)-\sigma^{ki}_t(x)D_ig^k_t. 
$$
It is straightforward to check by Kolmogorov's criterion that $(\eta_t(y))_{t\in[0,T],y\in\R^d}$ and $(U_t(y))_{t\in[0,T],y\in\R^d}$ have continuous versions, for which we use the same notation. The `right-hand-side' of the Feynman-Kac formula will then read as
\begin{equation}\label{eq:v}
v_t(x):=\E^{\hat P}\left((\psi\eta_t)(Y_{0,t}^{-1}(x))\mathbf{1}_{\gamma_{t,x}=0}+(U_t-U_{\gamma_{t,x}}\tfrac{\eta_t}{\eta_{\gamma_{t,x}}})(Y_{0,t}^{-1}(x))\right).
\end{equation}
\begin{remark}
Note that integrating out the $\hat\omega$ variable gives (a version of) the conditional expectation given $\cF$. Using then the explicit expressions for $\eta$ and $U$, the formula can be written in the more familiar form 
\begin{align*}
&\E\Big[\psi(y)e^{\varphi_t(y)}\mathbf{1}_{\tau=0}
+\Big(\int_\tau^t\bar f_s(Y_{0,s}(y))e^{-\varphi_s(y)}\,ds
\\
&\quad\quad+\int_\tau^tg_s^k(Y_{0,s}(y))e^{-\varphi_s(y)}\,dw^k_s\Big)e^{\varphi_t(y)}\Big|_{\subalign{&\tau=\gamma_{t,x} \\ &y=Y_{0,t}^{-1}(x)}}\Big|\cF\Big]
\end{align*}
where  $\varphi_t(y)=\int_0^t(\bar c_s-(1/2)|\mu_s|^2)(Y_{0,s}(y))\,ds+\int_0^t\mu_s^k(Y_{0,s}(y))\,dw_s^k$.
\end{remark}

Fubini's theorem tells us that \eqref{eq:v} is meaningful for $dt\otimes dx\otimes dP$-almost every $t,x,\omega$, in particular, there is an event of full probability on which $v_t(x)$ is well defined for almost all $t,x$. To talk about $v$ as a random field however, we need a slightly better property, given by the following proposition.
\begin{proposition}\label{prop:field}
Under Assumptions \ref{assumption regularitycoeff}-\ref{assumption regularitydata} , there exists an event of full probability on which the right-hand-side of \eqref{eq:v} exists for all $t,x$, and it is jointly measurable in $\omega,t,x$.
\end{proposition}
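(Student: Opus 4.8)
Throughout put $\bar\Omega=\Omega\times\hat\Omega$ and recall $\bar P=P\otimes\hat P$. The plan is to fix one event of full $\bar P$-probability carrying jointly continuous versions of all the random fields that enter \eqref{eq:v}, to deduce from it that $(\bar\omega,t,x)\mapsto\gamma_{t,x}$ and hence the integrand $F_t(x)$ in \eqref{eq:v} are jointly measurable, to dominate $|F_t(x)|$ \emph{uniformly in} $(t,x)$ by a random variable $C$ with $\E^{\hat P}C<\infty$ for $P$-a.e.\ $\omega$, and to conclude by Fubini's theorem. Concretely, I would first invoke the standard theory of stochastic flows (\cite{Kunita_Book97}, \cite{Kunita_StFleur}) under Assumption \ref{assumption regularitycoeff} together with Kolmogorov's criterion to fix $\bar\Omega_0$ with $\bar P(\bar\Omega_0)=1$ on which $(s,t,y)\mapsto Y_{s,t}(y)$, $(s,t,x)\mapsto Y_{s,t}^{-1}(x)$ and $(s,y)\mapsto(\eta_s(y),U_s(y))$ are jointly continuous, $\eta>0$, $\eta_0\equiv1$, and the cocycle identities $Y_{s,t}^{-1}(x)=Y_{0,s}(Y_{0,t}^{-1}(x))$, $Y_{0,s}(y)=Y_{a,s}(Y_{0,a}(y))$ hold for all admissible arguments. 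Setting $z:=Y_{0,a}(y)$, uniqueness for the linear, bounded-multiplicative-coefficient equation sitting behind \eqref{eq:U} then identifies, for $a\le s$, the process $U_s(y)-U_a(y)\eta_s(y)/\eta_a(y)$ with $W^a_s(z)$, where $W^a_\cdot(\cdot)$ solves that equation restarted from $0$ at time $a$ with $Y_{0,\cdot}$ replaced by $Y_{a,\cdot}$; enlarging $\bar\Omega_0$ I may take $(a,s,z)\mapsto W^a_s(z)$ jointly continuous there as well. Setting every field to $0$ off $\bar\Omega_0$ makes all of them jointly measurable in $\bar\omega$ and the space--time variables.

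Next, since $D$ is open the set $B:=([0,T]\times\R^d)\setminus((0,T]\times D)$ is closed, so on $\bar\Omega_0$ the function $\rho(s,t,x):=\mathrm{dist}\big((s,Y_{s,t}^{-1}(x)),B\big)$ is jointly continuous on $\{0\le s\le t\le T\}\times\R^d$ with $\rho(0,t,x)=0$; hence $\gamma_{t,x}=\max\{s\in[0,t]:\rho(s,t,x)=0\}$ is well defined, and for every $a\in\R$
\[
\{(\bar\omega,t,x):\gamma_{t,x}>a\}=\bigcup_{q\in\mathbb Q,\ q>a}\Big\{(\bar\omega,t,x):\ t\ge q,\ \min_{s\in[q,t]}\rho(s,t,x)=0\Big\}.
\]
Each set in the union is measurable because $(\bar\omega,t,x)\mapsto\min_{s\in[q,t]}\rho(s,t,x)=\inf_{s\in\mathbb Q,\ s\ge q}g_s(\bar\omega,t,x)$, with $g_s=\rho(s,t,x)$ for $t\ge s$ and $g_s=+\infty$ otherwise, is a countable infimum of jointly measurable functions; hence $(\bar\omega,t,x)\mapsto\gamma_{t,x}$, and so the integrand $F_t(x)$ in \eqref{eq:v}, is jointly measurable (a composition of the jointly measurable fields above with $(\gamma_{t,x},Y_{0,t}^{-1}(x))$ inserted in the variables in which those fields are continuous). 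A parallel limiting argument, using that $B$ is closed, shows $\gamma$ is upper semicontinuous in $(t,x)$ and that $(\gamma_{t,x},Y_{\gamma_{t,x},t}^{-1}(x))\in\partial B=(\{0\}\times\bar D)\cup([0,T]\times\partial D)$, so that $Y_{\gamma_{t,x},t}^{-1}(x)\in\partial D$ whenever $\gamma_{t,x}>0$.

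The domination is the step I expect to be the main obstacle. One has $F_0(x)=\psi(x)$ (as $\eta_0\equiv1$, $U_0\equiv0$), whence $|F_0(x)|\le|\psi|_{C^0}$; and for $x\notin D$, $t>0$, $(t,x)\in B$ forces $\gamma_{t,x}=t$ and $F_t(x)=0$. For $x\in D$, $t\in(0,T]$: on $\{\gamma_{t,x}=0\}$ the inverse characteristic lies in $D$ for $s\in(0,t]$, so $Y_{0,t}^{-1}(x)\in\bar D$ and, using $U_0\equiv0$, the right-hand side of \eqref{eq:v} equals $(\psi\eta_t)(Y_{0,t}^{-1}(x))+W^0_t(Y_{0,t}^{-1}(x))$; on $\{\gamma_{t,x}>0\}$ (necessarily $0<\gamma_{t,x}<t$) the indicator in \eqref{eq:v} vanishes and, by the cocycle identity and the boundary property above, the surviving term equals $W^{\gamma_{t,x}}_t(Y_{\gamma_{t,x},t}^{-1}(x))$ with base point on the compact $\partial D$. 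In every case the curve $s\mapsto Y_{s,t}^{-1}(x)=Y_{\gamma_{t,x},s}(Y_{\gamma_{t,x},t}^{-1}(x))$ stays in $\bar D$ on $[\gamma_{t,x},t]$, so this re-anchoring turns the a priori uncontrolled value of $U$ and $\eta$ at the possibly far-away point $Y_{0,t}^{-1}(x)$ into a quantity depending only on the part of the characteristic inside the compact set $\bar D$. Thus $|F_t(x)|\le C$ for all $(t,x)$, where $C$ is assembled from $|\psi|_{C^0}$ and the suprema over the relevant compact parameter sets of $\eta$, its reciprocal, and $W$; and $\E^{\hat P}C<\infty$ for $P$-a.e.\ $\omega$ will follow from: the boundedness of the multiplicative coefficients $\bar c,\mu$ (so the exponential factors of $\eta$ and $W$ and their reciprocals have moments of every order, uniformly in their parameters, whence their compact suprema are $\bar P$-integrable via Garsia--Rodemich--Rumsey); the decomposition of the additive data in $W$ into a $dr$-part bounded by the $(t,x)$-independent, square-integrable quantity $\int_0^T|\bar f_r-\mu_r g_r|_{C^0}\,dr$ (cf.\ Assumption \ref{assumption regularitydata}) and a martingale $dw$-part controlled by the $L_p$- and supremum-estimates recalled in Section \ref{sec:Preliminaries}; and $\E|\psi|_{C^0}^2\le K$. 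The genuinely delicate point here is controlling, uniformly in the base point $z\in\partial D$ (resp.\ $\bar D$), the martingale contribution of $g$ given only $L_2$-in-$(t,\omega)$ control of the data — exactly the place where the geometric fact that the characteristic hits $\partial D$ at time $\gamma_{t,x}$ is used to keep the effective parameter set as small as possible.

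Finally, on the full-probability event $\{\,\omega:\E^{\hat P}C<\infty\,\}$ the right-hand side $\E^{\hat P}F_t(x)$ of \eqref{eq:v} is well defined and finite for every $(t,x)$, and since $(\bar\omega,t,x)\mapsto F_t(x)$ is jointly measurable and dominated by $C$, Fubini's theorem shows $(\omega,t,x)\mapsto\E^{\hat P}F_t(x)$ is jointly measurable. Besides the domination estimate, the only other point requiring real care is that $\gamma$ is neither a stopping time nor continuous in $(t,x)$, which is handled by the measurability and semicontinuity observations of the second step.
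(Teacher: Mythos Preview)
Your proposal is essentially correct but follows a route different from the paper's in both main steps. For measurability, the paper does not argue directly with level sets of $\gamma$; instead it builds a two-parameter family of \emph{continuous} approximations $\cU^{(n,m)}$ by replacing $\mathbf{1}_{\{0\}}$ with the piecewise-linear cutoffs $\mathbf{1}^{(m)}$ and $\gamma_{t,x}$ with the averaged exit times $\gamma^{(n)}_{t,x}=n\int_0^{1/n}\gamma_{t,x}(\delta)\,d\delta$ (the latter shown to be continuous in $(t,x)$), then passes to the limit $n\to\infty$, $m\to\infty$. Your countable-union argument for $\{\gamma_{t,x}>a\}$ is arguably more direct. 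For the domination, however, you work much harder than necessary: the paper simply bounds the integrand by
\[
\sup_{(t,y)\in[0,T]\times D^1}|\psi\eta_t|(y)+2\sup_{(s,t,y)\in[0,T]^2\times D^1}\bigl|U_t\tfrac{\eta_s}{\eta_{t}}\bigr|(y),
\]
and asserts this is $\hat P$-integrable for a.e.\ $\omega$. The point you seem to have missed is the standing hypothesis that \emph{all coefficients and data vanish outside $D^1$}: this forces $Y_{0,t}^{-1}(D^1)\subset D^1$ and makes $\psi,\eta-1,U$ trivial off $D^1$, so the ``possibly far-away point $Y_{0,t}^{-1}(x)$'' you worry about is in fact always in the compact $D^1$, and your re-anchoring via the three-parameter family $W^a_s(z)$ (together with the boundary-hitting property of $\gamma$) is not needed. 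In short, your measurability step is a clean alternative, but for the integrability step the paper's argument is shorter precisely because it exploits the compact support of the coefficients, which dissolves what you flag as the ``genuinely delicate point''.
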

The proof of this is given in Section \ref{sec:Preliminaries}. We are now in a position to state the main result.
\begin{theorem}								\label{thm:rep}
Under Assumptions \ref{assumption coercivity}-\ref{assumption regularitydata}, $u_t(x)=v_t(x)$ for all $t$, $dx\otimes dP$-almost everywhere.
\end{theorem}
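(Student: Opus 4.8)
The plan is to prove the identity first for deterministic coefficients, then for coefficients that are piecewise constant in time (hence $\cF_{t_j}$-measurable on each piece), and finally for general predictable coefficients by an approximation. \emph{Step 1 (deterministic coefficients).} When $\rho,\sigma,b,c,\mu$ do not depend on $\omega$ the backward characteristics exist in the It\^o sense and $Y$ decomposes into its $w$-driven and (conditionally independent) $\hat w$-driven parts. First I would use the $L_p$-solvability theory of \cite{Kim_Lp} together with parabolic Sobolev embeddings to upgrade $u$ to a process that is, say, $C^2$ in $x$ with moments high enough to justify the It\^o--Wentzell formula. Composing $u$ with the stochastic flow $X$ obtained from \eqref{flow} by omitting the $\hat w$-driven terms, and using $a=\sigma\sigma^\ast+\rho\rho^\ast$, the It\^o--Wentzell formula cancels the transport martingale $\sigma^{ik}D_iu\,dw^k$ and reduces the second-order operator from $\tfrac12 a^{ij}D_iD_j$ to $\tfrac12(\rho\rho^\ast)^{ij}D_iD_j$, while the first-order and zeroth-order terms rearrange into $\rho^{ri}(D_i\rho^{r})D_j$, $\bar c$, $\bar f$; absorbing the leftover zeroth-order $w$-noise into the multiplicative factor $\eta$ of \eqref{eq:rho} and the additive term $U$ of \eqref{eq:U} leaves, for $\bar P$-a.e.\ $\omega$, a second-order parabolic Dirichlet problem with generator $\tfrac12(\rho\rho^\ast)^{ij}D_iD_j+\cdots$ posed on the random tube $\{(s,y):X_{0,s}(y)\in D\}$, its zero lateral data coming from $u|_{\partial D}=0$. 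To this problem the classical Feynman--Kac formula applies, the diffusion with that generator being realised on $(\hat\Omega,\hat\cF,\hat P)$; recombined with $X$ it reconstitutes the full flow $Y$, the killing time at the tube boundary becomes, through the change of variables $x=Y_{0,t}(y)$ and the cocycle property, the time $\gamma_{t,x}$ of \eqref{eq:gamma}, and re-expressing everything in the original variables yields \eqref{eq:v}. This part is close to \cite{FlanSchaum} (itself based on \cite{KR_Char}); what is new is running it with the genuinely nondegenerate direction $\rho$ present and on a bounded domain, which forces the It\^o--Wentzell computation to be localized at the first exit of $X_{0,\cdot}(y)$ from $D$.

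\emph{Step 2 (piecewise constant, then general coefficients).} Fix a partition $0=t_0<\cdots<t_N=T$ and replace each coefficient on $[t_{j-1},t_j)$ by its value at $t_{j-1}$, which is still $C^{2+\alpha}$ in $x$ and now $\cF_{t_{j-1}}$-measurable. On each subinterval, conditioning on $\cF_{t_{j-1}}$ (independent of the increments of $w,\hat w$ there) reduces matters to Step 1 with initial time $t_{j-1}$ and initial datum $u_{t_{j-1}}$; concatenating the pieces via the cocycle property of $Y$, the multiplicative cocycle property of $\eta$, the corresponding affine identity for $U$, and the way $\gamma_{t,x}$ decomposes over the subintervals reproduces \eqref{eq:v} for piecewise constant coefficients. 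For general coefficients satisfying Assumptions~\ref{assumption coercivity}--\ref{assumption regularitydata} I would approximate them by piecewise constant (in time) ones converging in the relevant norms while keeping the ellipticity constant $\lambda$ fixed. Stability of the $L_p$-theory in \cite{Kim_Lp} gives $u^{(n)}\to u$; on the other side the flows $Y^{(n)}$ and their inverses and the processes $\eta^{(n)},U^{(n)}$ converge in probability, uniformly on compacts, by standard SDE estimates, and --- using Proposition~\ref{prop:field} to make sense of everything pointwise in $(t,x)$ --- one gets $v^{(n)}_t(x)\to v_t(x)$, whence $u_t(x)=v_t(x)$.

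The step I expect to be the main obstacle is the convergence $v^{(n)}\to v$, specifically the convergence of the indicator $\mathbf 1_{\gamma^{(n)}_{t,x}=0}$ and of $\gamma^{(n)}_{t,x}$: since $\gamma_{t,x}$ is not a stopping time and in general depends discontinuously on the trajectory, one must rule out (on an event of full $\bar P$-measure) trajectories $s\mapsto Y^{-1}_{s,t}(x)$ that touch $\partial D$ without crossing it. This is exactly where the nondegeneracy of $\rho\rho^\ast$ near $\partial D$ (Assumption~\ref{assumption coercivity}) and the $C^2$-regularity of $D$ enter, via local straightening of the boundary together with hitting-time/support arguments for the $\hat w$-driven component of the characteristics, which force a clean transversal crossing a.s.; the same nondegeneracy is what makes the parabolic Feynman--Kac step of Step 1 legitimate up to the boundary of the tube. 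The remaining ingredients --- the regularity bootstrap for $u$, moment and continuity bounds for $Y,\eta,U$, and the bookkeeping in the concatenation of Step 2 --- are routine and, together with Proposition~\ref{prop:field}, belong to the preliminary section.
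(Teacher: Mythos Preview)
Your three–step architecture (deterministic coefficients $\to$ piecewise constant in time $\to$ general predictable by approximation) is exactly the one the paper follows, and you have correctly singled out the real obstruction in Step~3: the discontinuous dependence of $\gamma_{t,x}$ on the trajectory, which the paper handles precisely by showing (Proposition~\ref{prop:touching}, via the lemma preceding it) that $(Y_{s,t}^{-1}(x))_{s\in[0,t]}\notin\cT_D$ almost surely, using the nondegeneracy of $\rho\rho^\ast$ near $\partial D$ and a reduction to a one–dimensional Brownian hitting–time argument. Your Steps~2 and~3 are essentially the paper's, including the concatenation bookkeeping for $\eta$, $U$ and $\gamma$ and the Vitali/uniform–integrability passage to the limit.

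Where you diverge is Step~1. The paper does \emph{not} bootstrap $u$ to $C^2$ and apply It\^o--Wentzell. Instead it (a) obtains a representation through the \emph{backward} characteristics \eqref{eq:X}, following the Krylov--Rozovskii trick of representing the SPDE via the family of deterministic Dirichlet problems $d\bar u=[L\bar u+f+q^k(M^k\bar u+g^k)]\,dt$ parametrised by $q\in C^\infty([0,T],l_2)$ and their classical Feynman--Kac formulae; and then (b) converts the backward flow $X$ into the inverse of the forward flow $Y$ via Kunita's Theorem~II.6.1, doing the (nontrivial) algebra for the auxiliary coordinates $\eta^B,U^B$ explicitly. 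The payoff is that no extra regularity of $u$ is needed (in particular $\psi\in C^\alpha$ suffices and $\psi$ need not vanish on $\partial D$), and the Feynman--Kac input is the standard one on the fixed domain $D$. Your route --- transform away the $w$-noise by It\^o--Wentzell, then apply a deterministic Feynman--Kac on the random, time-varying tube $\{(s,y):X_{0,s}(y)\in D\}$ --- is a legitimate alternative, but it carries two costs you should not gloss over: first, the $L_p$-theory of \cite{Kim_Lp} with $\psi\in C^\alpha$ does not by itself give $u\in C^2$ up to $\partial D$, so the It\^o--Wentzell step needs either a distributional version or an additional smoothing of $\psi,f,g$ followed by a separate limit; second, the classical Feynman--Kac you invoke must be stated and justified on a moving $C^2$ tube (which is fine, since $X_{0,s}$ is a diffeomorphism, but is an extra ingredient). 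Either route lands on the same formula; the paper's choice avoids both of these regularity detours at the price of the backward-to-inverse algebra in part~(b).
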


\section{Preliminaries}\label{sec:Preliminaries}
\emph{Proof of Proposition \ref{prop:field}}.
Consider the random fields
\begin{align*}
\cU^{(n,m)}_t(x)&=(\psi\eta_t)(Y_{0,t}^{-1}(x))\mathbf{1}^{(m)}(\gamma^{(n)}_{t,x})+(U_t-U_{\gamma^{(n)}_{t,x}}\tfrac{\eta_t}{\eta_{\gamma^{(n)}_{t,x}}})(Y_{0,t}^{-1}(x)),
\\
\cU^{(m)}_t(x)&=(\psi\eta_t)(Y_{0,t}^{-1}(x))\mathbf{1}^{(m)}(\gamma_{t,x})+(U_t-U_{\gamma_{t,x}}\tfrac{\eta_t}{\eta_{\gamma_{t,x}}})(Y_{0,t}^{-1}(x)),
\\
\cU_t(x)&=(\psi\eta_t)(Y_{0,t}^{-1}(x))\mathbf{1}_{\gamma_{t,x}=0}+(U_t-U_{\gamma_{t,x}}\tfrac{\eta_t}{\eta_{\gamma_{t,x}}})(Y_{0,t}^{-1}(x)),
\end{align*}
for $n,m\in\mathbb{N}$, where 
$$
\mathbf{1}^{(m)}(x):=\mathbf{1}_{x<0}+(1-mx)\mathbf{1}_{x\in[0,1/m]},
$$
$$
\gamma^{(n)}_{t,x}:=n\int_0^{1/n}\gamma_{t,x}(\delta)\,d\delta,
$$
with the notation $D_\delta=\{x\in D:d(x,\partial D)>\delta\}$ 
and 
$$
\gamma_{t,x}(\delta)=\sup\{s\in[0,t]:(s,Y_{s,t}^{-1}(x))\notin(0,T]\times D_{\delta}\}.
$$
Note that $\gamma^{(n)}$ is continuous on $[0,T]\times D_{2/n}$: there exists a $\delta_0=\delta_0(\omega,n)$ such that if $|(t,x)-(t',x')|\leq\delta_0$, $t'>t$, and $x,x'\in D_{2/n}$ then $\inf_{s\in[t,t']}d(Y_{s,t'}^{-1}(x'),D_{1/n})<1/2n$ and $\sup_{s\in[0, t]}$ $|Y_{s,t}^{-1}(x)-Y_{s,t'}^{-1}(x)|\leq\varepsilon$. Therefore, we can write, for $\varepsilon\leq\delta\leq 1/n,$
$$
\gamma_{t',x'}(\delta-\varepsilon)\leq\gamma_{t,x}(\delta)\leq\gamma_{t',x'}(\delta+\varepsilon)\quad\text{ and hence }\quad\gamma_{t',x'}^{(n)}-\varepsilon n T\leq\gamma_{t,x}^{(n)}\leq\gamma_{t',x'}^{(n)}+\varepsilon n T.
$$

By Fubini's theorem there is an event $\tilde{\Omega}$ of full probability on which for almost all $t,x$, $\cU^{(n,m)}_t(x)$ is measurable as a function of $\hat \omega$. Since $\cU^{(n,m)}$ is continuous, this actually holds for all $t,x$. Since $\gamma_{t,x}(\delta)$ is right-continuous in $\delta$, the functions $\gamma_{t,x}^{(n)}$ converge to $\gamma_{t,x}$, and so $\cU^{(n,m)}_t(x)$ converge to $\cU^{(m)}_t(x)$ for all $t,x$. In particular, for $\omega\in\tilde{\Omega}$, $\cU^{(m)}_t(x)(\omega,\hat\omega)$ is a measurable function of $\hat\omega$ for all $t,x$. Taking then the $m\rightarrow\infty$ limit, this  holds for $\cU$ as well. Therefore $\cU_t(x)$ is a measurable function that is dominated by 
$$
\sup_{(t,y)\in[0,T]\times D^1}|\psi\eta_t|(y)+2\sup_{(s,t,y)\in[0,T]^2\times D^1}|U_t\tfrac{\eta_s}{\eta_{t}}|(y),
$$
which is integrable in $\hat\omega$ for almost all $\omega$, and therefore so is $\cU_t(x)$.
\qed

The following limit theorem is known, see e.g \cite{Kunita_StFleur}, \cite{LM_Flows}.
\begin{lemma}\label{lem:inverse flow convergence}
Let $\rho^n$, $\sigma^n$, $\mu^n$, and $b^n$ be coefficients satisfying Assumption \ref{assumption regularitycoeff} for $n=0,1,\ldots$ such that
$$
|\rho^n-\rho^0|_{C^{2+\alpha}(D^1)}+|\sigma^n-\sigma^0|_{C^{2+\alpha}(D^1)}
+|\mu^n-\mu^0|_{C^{1+\alpha}(D^1)}+|b^n-b^0|_{C^{1+\alpha}(D^1)}
$$
converges to $0$ in measure with respect to $dt\otimes dP$ as $n\rightarrow \infty$. Then
$$
\lim_{n\rightarrow\infty}\E\sup_{t\in[0,T]}|Y_{0,t}^{n}-Y_{0,t}^{0}|_{C^{1+\alpha}(D^1)}^2=0,
$$
$$
\lim_{n\rightarrow\infty}\E\sup_{t\in[0,T]}|Y_{0,t}^{n,-1}-Y_{0,t}^{0,-1}|_{C^{1+\alpha}(D^1)}^2=0.
$$
\end{lemma}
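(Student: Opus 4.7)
The plan is to reduce the statement to the standard stochastic flow convergence theorems (as cited from \cite{Kunita_StFleur}, \cite{LM_Flows}) via two preparatory steps: upgrading the coefficient convergence in measure to $L^q$ convergence, and then transferring the forward flow result to the inverse flow.

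First, I would strengthen the hypothesis. Because of Assumption \ref{assumption regularitycoeff}, every difference $\rho^n-\rho^0$, $\sigma^n-\sigma^0$, $\mu^n-\mu^0$, $b^n-b^0$ is bounded in the respective H\"older norm on $D^1$ by $2K$ uniformly in $(t,\omega,n)$. Hence the assumed $dt\otimes dP$-convergence to $0$ in measure, combined with this uniform boundedness, yields via Vitali's theorem that
$$
\E\int_0^T\!\bigl(|\rho^n-\rho^0|^q_{C^{2+\alpha}(D^1)}+|\sigma^n-\sigma^0|^q_{C^{2+\alpha}(D^1)}+|\mu^n-\mu^0|^q_{C^{1+\alpha}(D^1)}+|b^n-b^0|^q_{C^{1+\alpha}(D^1)}\bigr)\,dt\to 0
$$
for every finite $q$. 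With this in hand I can apply Kunita's flow continuity result to the SDE \eqref{flow} driven by the coefficients $\beta^n$, $\sigma^n$, $\rho^n$ (where $\beta^n$ is built from $b^n,\sigma^n,\rho^n,\mu^n$ via the formula in Section~\ref{sec:formulation}; the $C^{1+\alpha}$ convergence of $\beta^n$ to $\beta^0$ follows from the convergences of the individual pieces together with their $C^{2+\alpha}$ control). The standard statement gives the first displayed limit: under $C^{2+\alpha}$-convergence of the diffusion coefficients and $C^{1+\alpha}$-convergence of the drift, the flows converge in $L^2(\Omega;C([0,T];C^{1+\alpha}(D^1)))$. One proves this by combining (i) an $L^p$ estimate for $\E\sup_t|Y^n_{0,t}(y)-Y^0_{0,t}(y)|^p$ via BDG and Gr\"onwall, (ii) the same for the Jacobian $DY^n_{0,t}(y)$, which solves a linear SDE in the variational equation, and (iii) Kolmogorov's continuity criterion applied to high moments of spatial increments of these differences to lift pointwise-in-$y$ convergence to $C^{1+\alpha}$-convergence.

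For the inverse flow, I would use that $(Y^n_{0,t})^{-1}$ is itself a stochastic flow: either the backward flow of the same SDE, or equivalently the forward flow obtained by time-reversal, whose coefficients are the original ones evaluated along $Y^n$ (with the standard Stratonovich-to-It\^o correction). The $C^{2+\alpha}$/$C^{1+\alpha}$ regularity of the original coefficients and the already-established $C^{1+\alpha}$ convergence of $Y^n$ transfer to this transformed SDE, so the same theorem yields the second limit. A cleaner alternative, which avoids restating the theorem, is to differentiate the identity $Y^{n,-1}_{0,t}(Y^n_{0,t}(y))=y$, express $Y^{n,-1}_{0,t}-Y^{0,-1}_{0,t}$ in terms of $Y^n-Y^0$ composed with the inverse flows, and exploit the uniform $C^{1+\alpha}$ bounds on $Y^{n,-1}$ (obtained from those on $Y^n$ together with the non-degeneracy of the Jacobian) to bound the composition by the forward flow difference.

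The main obstacle, and the reason the result is quoted rather than reproved from scratch in the paper, is the lifting from pointwise/moment convergence to convergence in the H\"older norm $C^{1+\alpha}$. This requires higher moment estimates on increments of the Jacobian differences and a careful Kolmogorov-type argument; the loss of one derivative in passing from the coefficients ($C^{2+\alpha}$) to the flow ($C^{1+\alpha}$) is essentially forced by this step. The reduction from convergence in measure, and the passage to the inverse flow, are by comparison routine once the forward statement is granted.
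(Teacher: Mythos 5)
The paper does not prove Lemma \ref{lem:inverse flow convergence} at all: it is stated as a known result with references to \cite{Kunita_StFleur} and \cite{LM_Flows}, so the ``paper's approach'' is precisely to defer to the standard flow-convergence theorems you invoke. Your outline --- upgrading convergence in measure to $L^q$ convergence via uniform boundedness and Vitali, checking that $\beta^n\to\beta^0$ in $C^{1+\alpha}$, applying the Kunita-type continuity theorem for forward flows, and transferring to the inverses via the backward/time-reversed flow or the identity $Y^{n,-1}_{0,t}\circ Y^n_{0,t}=\mathrm{id}$ together with uniform moment bounds on the Jacobians --- is a correct account of how those cited results yield the lemma.
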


Define the set of trajectories that `touch' the boundary at some point as 
$$
\cT_D=\cup_{t>0}\cT_D(t),
$$
 where 
\begin{align*}
\cT_D(t)=\{f\in C([0,t],\R^d):\,\,
&\exists s\in[0,t], \varepsilon>0\,\,\text{such that}\\
  &f_s\in\partial D;\;\;\;\
  \forall r\in[(s-\varepsilon)\vee 0,(s+\varepsilon)\wedge t]\;\; f_r\in\bar D	\}.
\end{align*}

\begin{lemma}
One has, $dx\otimes dP\otimes d\hat P$-almost surely
$$
(Y_{0,s}(x))_{s\in[0,T]}\notin\cT_D.
$$
\end{lemma}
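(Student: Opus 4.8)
The plan is to show that the uniformly nondegenerate diffusion \eqref{flow} cannot touch $\partial D$ without immediately entering the exterior $\R^{d}\setminus\bar D$, and then to upgrade this pathwise fact from (countably many) stopping times to the whole trajectory by a covering argument over rational times — the detour through rational times being forced by the fact, noted after \eqref{eq:gamma}, that the times at which $Y_{0,\cdot}(x)$ visits $\partial D$ are not stopping times. First I would fix $\phi\in C^{2}(\R^{d})$ which on a bounded neighbourhood $\cN\subset D^{1/2}$ of $\partial D$ coincides with the signed distance to $\partial D$, positive inside $D$; such $\phi$ exists since $D$ is a bounded $C^{2}$ domain, and $\phi=0$, $|\nabla\phi|=1$ on $\partial D$, while $\phi<0$ on $\cN\setminus\bar D$. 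Let $a=\sigma\sigma^{*}+\rho\rho^{*}$ be the diffusion matrix of \eqref{flow} (bounded, by Assumption \ref{assumption regularitycoeff}); by Assumption \ref{assumption coercivity} $\nabla\phi^{*}a_{t}\nabla\phi\ge\lambda$ on $\partial D$ for all $t,\omega$, so after shrinking $\cN$ we may assume $\nabla\phi^{*}a_{t}\nabla\phi\ge\lambda/2$ on all of $\cN$.

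The crucial step is: for every $x\in\R^{d}$ and every stopping time $\tau$ of the (completed) filtration to which the solution of \eqref{flow} is adapted, one has $\bar P$-almost surely on $\{\tau<T\}\cap\{Y_{0,\tau}(x)\in\partial D\}$ that $\inf_{r\in(\tau,\tau+\delta)}\phi(Y_{0,r}(x))<0$ for every $\delta>0$, that is, $Y_{0,\cdot}(x)$ leaves $\bar D$ at times arbitrarily soon after $\tau$. To prove this, note that on $\{Y_{0,\tau}(x)\in\partial D\}$ the path remains in the open set $\cN$ on a random positive interval $[\tau,\tau+\delta_{0}]$, where It\^o's formula gives $\phi(Y_{0,\tau+h}(x))=M_{h}+V_{h}$ with $\phi(Y_{0,\tau}(x))=0$, $M$ a continuous local martingale with $\tfrac{\lambda}{2}\le d\langle M\rangle_{h}/dh\le C$, and $|dV_{h}/dh|\le C$, for a constant $C=C(K,\lambda,\phi)$. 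Writing $M_{h}=W_{\langle M\rangle_{h}}$ for a Brownian motion $W$ (Dambis--Dubins--Schwarz) and using $\langle M\rangle_{h}\ge\tfrac{\lambda}{2}h$, the event that $\phi(Y_{0,\tau+h}(x))\ge0$ throughout a right-neighbourhood of $0$ would force $W_{v}\ge-(2C/\lambda)v$ throughout a right-neighbourhood of $0$, an event of probability zero by the law of the iterated logarithm.

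Next, fixing $x$, for $q\in\mathbb{Q}\cap[0,T)$ set $\tau_{q}=\inf\{s\ge q:Y_{0,s}(x)\in\partial D\}$, a stopping time, and let $N_{x}$ be the union over these $q$ of the events on which $\tau_{q}<T$, $Y_{0,\tau_{q}}(x)\in\partial D$, and $Y_{0,r}(x)\in\bar D$ for all $r$ in some right-neighbourhood of $\tau_{q}$; by the previous step $\bar P(N_{x})=0$. I claim
$$
\{Y_{0,\cdot}(x)\in\cT_{D}\}\ \subset\ N_{x}\cup\{x\in\partial D\}\cup\{Y_{0,T}(x)\in\partial D\}.
$$
Indeed, if $Y_{0,\cdot}(x)\in\cT_{D}$ is witnessed by $s_{0}\in[0,T]$ and $\varepsilon>0$, then $s_{0}=0$ gives $x\in\partial D$ and $s_{0}=T$ gives $Y_{0,T}(x)\in\partial D$, so assume $s_{0}\in(0,T)$ and, shrinking $\varepsilon$, $[s_{0}-\varepsilon,s_{0}+\varepsilon]\subset(0,T)$. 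Pick $q\in\mathbb{Q}\cap(s_{0}-\varepsilon,s_{0})$: then $q\le\tau_{q}\le s_{0}<T$, with $\tau_{q}>s_{0}-\varepsilon$, and $Y_{0,\tau_{q}}(x)\in\partial D$ because $\tau_{q}$ is an entry time into the closed set $\partial D$. If $\omega\notin N_{x}$, then for every $\delta>0$ there is $r\in(\tau_{q},\tau_{q}+\delta)$ with $Y_{0,r}(x)\notin\bar D$; taking $\delta=s_{0}+\varepsilon-\tau_{q}$ gives such an $r$ in $(s_{0}-\varepsilon,s_{0}+\varepsilon)$, contradicting the witness. This proves the claimed inclusion.

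The inclusion yields, for all $x$ and $(\omega,\hat\omega)$, the pointwise bound $\mathbf{1}\{Y_{0,\cdot}(x)\in\cT_{D}\}\le\mathbf{1}_{N_{x}}+\mathbf{1}\{x\in\partial D\}+\mathbf{1}\{Y_{0,T}(x)\in\partial D\}$, whose right-hand side is jointly measurable in $(x,\omega,\hat\omega)$. Integrating against $dx\otimes d\bar P$: the $N_{x}$-term vanishes since $\bar P(N_{x})=0$ for each $x$; the $\{x\in\partial D\}$-term vanishes since $\partial D$ has Lebesgue measure zero in $\R^{d}$; and $\E\int_{\R^{d}}\mathbf{1}\{Y_{0,T}(x)\in\partial D\}\,dx=\E|Y_{0,T}^{-1}(\partial D)|=0$ because $Y_{0,T}$ is $\bar P$-a.s.\ a $C^{1}$ diffeomorphism of $\R^{d}$ and so maps the null set $\partial D$ to a null set. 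Hence $(Y_{0,s}(x))_{s\in[0,T]}\notin\cT_{D}$ for $dx\otimes dP\otimes d\hat P$-almost all $(x,\omega,\hat\omega)$. I expect the main obstacle to be the crucial step of the second paragraph, turning Assumption \ref{assumption coercivity} into the pathwise statement that the diffusion, on reaching $\partial D$, at once visits the exterior; the covering argument that follows is elementary but has to be routed through the countable family $\{\tau_{q}\}$ precisely because the visiting times of $\partial D$ are not stopping times, which is also what produces the two harmless endpoint exceptional sets.
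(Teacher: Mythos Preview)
Your argument is correct and proceeds along a genuinely different route from the paper's. Both proofs pass through the signed distance $\phi$ to reduce to a one-dimensional semimartingale with quadratic variation bounded below near $\partial D$, and both dispose of the endpoint $s_0=T$ (and in your case also $s_0=0$) via the obvious null sets. The core difference is how the one-dimensional fact is established.

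The paper localizes the process $Z_s=\phi(Y_{0,s}(x))$ to the excursion intervals where $|Z|\le\delta$, performs a Girsanov change of measure to kill the drift, and then a deterministic time change to obtain a standard Brownian motion $B$; the conclusion then comes from the countable union bound $\bar P\big((B_s)\in\cT_{\R^++a}\big)\le\sum_{r,q\in\mathbb{Q}^+}P(\min_{s\in[r,q]}B_s=a)=0$, using absolute continuity of the running minimum. You instead prove a local statement at stopping times---at any stopping time $\tau$ with $Y_{0,\tau}(x)\in\partial D$ the process immediately exits $\bar D$---by writing $\phi(Y_{0,\tau+\cdot}(x))=M+V$ with $|V_h|\le Ch$, applying Dambis--Dubins--Schwarz to $M$, and invoking the small-time law of the iterated logarithm for the resulting Brownian motion to beat the linear drift; you then cover the trajectory by the countable family of first hitting times $\tau_q=\inf\{s\ge q:Y_{0,s}(x)\in\partial D\}$ over rational $q$. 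Your approach avoids the measure change and the preliminary excursion decomposition, at the cost of the (standard) technicality that DDS may require an enlargement of the probability space when $\langle M\rangle_\infty<\infty$; the paper's approach trades this for Girsanov and an explicit distributional fact about Brownian minima. Both are clean; yours is arguably the more direct of the two.
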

\begin{proof}
First notice that it suffices to prove the statement 
when one modifies the definition of $\cT_D$ to, say, $\cT'_D$, 
by changing $s\in[0,t]$ to $s\in[0,t)$ 
in the definition of $\cT_D(t)$. Indeed, 
the trajectory $(Y_{0,s}(x))_{s\in[0,T]}$ may only belong to 
$\cT_D\setminus\cT'_D$ if $Y_{0,T}(x)\in\partial D$, in other words, for
$$
(x,\omega,\hat\omega)\in\{(x,\omega,\hat\omega):x\in Y_{0,T}^{-1}(\partial D)(\omega,\hat\omega)\},
$$
and the latter set is of measure 0. The function
$$
d(x)=d(x,{\partial D})\text{ if }x\in \bar D, \quad d(x)=-d(x,\partial D)\text{ if } x\notin D
$$
is $C^2$ in a neighbourhood of $\partial D$, see e.g. \cite{GilbargT}. It is also easy to see that $|\nabla {d}_D|$ is separated away from zero in a neighbourhood of $\partial D$. One can then find a globally $C^2$ function $\hat d$ which agrees with $d$ on a neighbourhood of $\partial D$ and is separated away from zero outside that neighbourhood. Defining $Z_s:=\hat d(Y_{0,s}(x))$, we have
$$
(Y_{0,s}(x))_{s\in[0,T]}\notin\cT'_D\quad\Leftrightarrow\quad(Z_s)_{s\in[0,T]}\notin\cT'_{\R^+}.
$$
The process $Z$ has It\^o differential
$$
dZ_t=b_t\,dt+\sigma_t\,d\bar w_t
$$
with the Wiener process $\bar w=(w,\hat w)$ and with some bounded predictable functions $b$ and $\sigma$. Moreover, $d\langle Z\rangle_t\geq\lambda\mathbf{1}_{|Z_t|\leq\delta}\,dt$ for some positive constants $\lambda$ and $\delta$. Define the stopping times $\tau_0=0$ and for $i\geq 0$
\begin{align*}
\tau_{2i+1}&=\inf\{s\geq\tau_{2i}:\,|Z_s|\geq\delta\}\wedge T,\\
\tau_{2i+2}&=\inf\{s\geq\tau_{2i+2}:\,|Z_s|\leq\delta/2\}\wedge T.
\end{align*}
Note that the hitting times of $0$ of $Z$ can only occur on the intervals $[\tau_{2i},\tau_{2i+1}]$. Let us define $\bar b^i_t=\bar b_{(t+\tau_{2i})\wedge\tau_{2i+1}}$, $\bar\sigma^i_t=\bar \sigma_{(t+\tau_{2i})\wedge\tau_{2i+1}}$, and $\bar w^i_t=\bar w_{t+\tau_{2i}}$. Then for each $i\geq0$,
$$
Z^i_t:=\int_0^t\bar b^i_{s}\,ds+\int_0^t\bar\sigma^i_s\,d\bar w_s
$$
is a semimartingale with respect to the filtration $(\cF_{\tau_{2i}+s})_{s\geq0}$, satisfying $d\langle Z^i\rangle_t\geq\lambda\,dt$. Moreover, if $(Z_s)_{s\in[0,T]}\in\cT'_{\R^+}$, then $(Z_s^i)_{s\geq0}\in\cT_{\R^+ +a}$ for some $i\geq0$ and for one of $a=\delta/2$, $-\delta/2$, or $-Z_0$. Fixing $i$ and $a$ like so, to show that $(Z_s^i)_{s\geq0}\in\cT'_{\R^+ +a}$ has probability zero, we may change to an equivalent measure and hence by a Girsanov transform we may assume that $b=0$. Moreover, the probability also doesn't change if we perform a time change whose derivative is separated from $0$ and $\infty$, and so it actually suffices to see that $\bar P((B_s)_{s\geq0}\in\cT_{\R^++a})=0$ for a standard 1-dimensional Brownian motion. This is however known, and follows from
$$
\bar P((B_s)_{s\geq0}\in\cT_{\R^++a})\leq\sum_{r,q\in\mathbb{Q^+}}P(\min_{s\in[r,q]}B_s=a),
$$
and recalling that since the random variable $\min_{s\in[r,q]}B_s$ is absolutely continuous (in fact, with explicitly known density), and hence each term in the above sum is 0.
\end{proof}

\begin{proposition}\label{prop:touching}
For all $t\in[0,T]$, $dx\otimes P\otimes\hat P$-almost everywhere, 
\begin{equation}\label{eq:touching}
(Y_{s,t}^{-1}(x))_{s\in[0,t]}\notin\cT_D.
\end{equation}
\end{proposition}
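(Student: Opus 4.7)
The plan is to use the composition identity for the flow, $Y_{s,t}\circ Y_{0,s}=Y_{0,t}$ for $0\le s\le t$, whose inversion gives $Y_{s,t}^{-1}(x)=Y_{0,s}(Y_{0,t}^{-1}(x))$ for all $s\in[0,t]$. Thus the backward/inverse trajectory emanating from $(t,x)$ coincides with the forward $Y_{0,\cdot}$-trajectory starting from $z:=Y_{0,t}^{-1}(x)$, restricted to $[0,t]$. By Fubini, it is enough to fix a $(P\otimes\hat P)$-full-probability realization $(\omega,\hat\omega)$ and show that $\{x:(Y_{s,t}^{-1}(x))_{s\in[0,t]}\in\cT_D\}$ is Lebesgue-negligible. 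Under Assumption \ref{assumption regularitycoeff} the map $Y_{0,t}$ is pathwise a $C^1$-diffeomorphism of $\R^d$ with Jacobian bounded away from $0$ and $\infty$ on bounded sets, so the change of variable $x=Y_{0,t}(z)$ reduces the task to showing that $\tilde S_t:=\{z:(Y_{0,s}(z))_{s\in[0,t]}\in\cT_D(t)\}$ has zero Lebesgue measure.

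I would then split $\tilde S_t$ according to where the tangent touching point $s$ from the definition of $\cT_D(t)$ can be placed. If $s$ may be chosen in $[0,t)$, then for $\varepsilon$ small enough $(s+\varepsilon)\wedge t=s+\varepsilon$, so the two-sided tangent condition appearing in $\cT_D(t)$ is \emph{identical} to the two-sided condition used in $\cT_D(T)$ applied to the full $[0,T]$-trajectory. Hence such $z$ belong to $\{z:(Y_{0,s}(z))_{s\in[0,T]}\in\cT_D\}$, which is $dz\otimes dP\otimes d\hat P$-null by the preceding lemma (together with Fubini in the other direction). The only remaining possibility is that the touching is realized exclusively at $s=t$; but this forces $Y_{0,t}(z)\in\partial D$, i.e. $z\in Y_{0,t}^{-1}(\partial D)$. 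Since $Y_{0,t}$ is a $C^1$-diffeomorphism and $\partial D$ is a $C^2$-hypersurface, $Y_{0,t}^{-1}(\partial D)$ is a $(d-1)$-dimensional $C^1$-submanifold of $\R^d$ and therefore Lebesgue-negligible.

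Combining the two cases makes $\tilde S_t$, and consequently the corresponding set in $x$, Lebesgue-null for the fixed realization; integrating back over $(\omega,\hat\omega)$ completes the argument. The main obstacle I anticipate is precisely the boundary case $s=t$: the preceding lemma only rules out \emph{two-sided} tangent touches on the whole interval $[0,T]$, so when one restricts to $[0,t]$, an entirely new, \emph{one-sided} tangent touch at the right endpoint could in principle appear and would not be controlled by the lemma. The resolution is the observation that this can occur only when $z$ lies in the preimage of a smooth hypersurface under $Y_{0,t}$, which for each fixed $t$ is Lebesgue-negligible.
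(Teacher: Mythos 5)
Your proposal is correct and follows essentially the same route as the paper: rewrite $Y_{s,t}^{-1}(x)=Y_{0,s}(Y_{0,t}^{-1}(x))$ and pull back the null set from the preceding lemma through the diffeomorphism $Y_{0,t}$ (the paper does this at the level of the $dy$-integral and Jacobian, you do it pathwise via Fubini, which is the same bookkeeping). You are in fact slightly more careful than the printed argument: you explicitly isolate the one-sided touch at the right endpoint $s=t$, which is not literally covered when the lemma (stated for $[0,T]$) is invoked with terminal time $t<T$, and you dispose of it via Lebesgue-negligibility of $Y_{0,t}^{-1}(\partial D)$ --- precisely the device the lemma's own proof uses at its terminal time.
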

\begin{proof}
By the previous lemma we can write
\begin{align*}
0&=\int_{D^1} \bar P((Y_{0,s}(y))_{s\in[0,t]}\in \cT_D)\,dy\\
&=\E\int_{D^1}\mathbf{1}_{(Y_{0,s}(y))_{s\in[0,t]}\in \cT_D}\,dy\\
&=\E\int_{D^1}\mathbf{1}_{(Y_{0,s}(Y_{0,t}^{-1}(x)))_{s\in[0,t]}\in \cT_D}|\det\nabla {Y_{0,t}^{-1}}(x)|\,dx.
\end{align*}
After interchanging the integral and expectation we conclude that for almost all $x$, 
$$
\E[\mathbf{1}_{(Y_{0,s}(Y_{0,t}^{-1}(x)))_{s\in[0,t]}\in \cT_D}|\det\nabla {Y_{0,t}^{-1}}(x)|]=0,
$$
and since, $\inf_{x\in D^1}|\det\nabla {Y_{0,t}^{-1}}(x)|$ is almost surely nonzero, the indicator is almost surely 0, which proves the claim.
\end{proof}

\begin{proposition}\label{prop:ui}
Let $\{f_i\}_{i\in I}$ be a uniformly integrable family of real-valued functions on a product of two measure spaces $(A,\mu)$ and $(B,\nu)$. Then $\{f_i(a,\cdot)\}_{i\in I}$ is uniformly integrable for almost all $a\in A$.
\end{proposition}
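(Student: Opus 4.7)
My plan is to reduce to the de la Vall\'ee Poussin characterization of uniform integrability, apply Tonelli to sections, and then transfer the criterion back to $(B,\nu)$. By hypothesis one can choose a nondecreasing convex $\Phi : [0,\infty) \to [0,\infty)$ with $\Phi(x)/x \to \infty$ as $x \to \infty$ such that
\[
C := \sup_{i \in I} \int_{A \times B} \Phi(|f_i|)\,d(\mu \otimes \nu) < \infty.
\]
Setting $h_i(a) := \int_B \Phi(|f_i(a,b)|)\,d\nu(b)$, Tonelli's theorem will yield that $h_i$ is $\mu$-measurable and $\int_A h_i\,d\mu \le C$ for every $i$, so $\{h_i\}_{i\in I}$ is bounded in $L^1(\mu)$ uniformly in $i$.

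The intended transfer to sections then goes as follows. Assuming I can reduce to (or work only with) a countable $I$, intersecting countably many exceptional sets gives a $\mu$-conull set on which $h_i(a) < \infty$ for every $i \in I$ simultaneously. If I can further establish $\sup_{i \in I} h_i(a) < \infty$ for $\mu$-almost every $a$, then the de la Vall\'ee Poussin criterion applied on $(B,\nu)$ with the same $\Phi$ yields uniform integrability of $\{f_i(a,\cdot)\}_{i \in I}$ at each such $a$, which is the desired conclusion.

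I expect the last step to be the main obstacle: passing from an $L^1(\mu)$-bound on $\{h_i\}$ to pointwise almost-everywhere finiteness of $\sup_i h_i$ is \emph{not} automatic from Markov's inequality together with a union bound once $I$ is infinite. To overcome this I would try either (i) an iterated de la Vall\'ee Poussin argument, showing that $\{h_i\}$ is itself uniformly integrable in $L^1(\mu)$ so that one controls $\sup_i h_i$ through superlinear moments, or (ii) a Borel--Cantelli estimate along a carefully tuned sequence of thresholds $T_k$, exploiting the summability of $\sup_i \int |f_i|\mathbf{1}_{|f_i|>T_k}\,d(\mu\otimes\nu)$ coming from the UI hypothesis. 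If the use of the proposition in the paper in fact only invokes UI in the stronger sense of the existence of a common integrable majorant $g \ge |f_i|$ for all $i$, then the whole argument collapses: by Tonelli $g(a,\cdot) \in L^1(\nu)$ for $\mu$-a.e.\ $a$ and dominates every section, giving the conclusion immediately; this is the fallback route I would keep in reserve.
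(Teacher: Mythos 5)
You have essentially reconstructed the paper's own argument: apply de la Vall\'ee Poussin on the product to obtain a superlinear $G$ with $\sup_i\int_{A\times B}G(|f_i|)\,d(\mu\otimes\nu)<\infty$, pass to sections $h_i(a):=\int_B G(|f_i(a,b)|)\,d\nu(b)$, and invoke the converse de la Vall\'ee Poussin criterion on $(B,\nu)$. The step you single out as the ``main obstacle'' --- passing from $\sup_i\int_A h_i\,d\mu<\infty$ to $\sup_i h_i(a)<\infty$ for $\mu$-a.e.\ $a$ --- is precisely the step the paper disposes of in one phrase ``by Fubini's theorem,'' and your skepticism is justified: this would require exchanging $\sup_i$ and $\int_A$, which is not licensed.

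In fact the proposition is false as stated, so none of your repair routes (i)--(ii) can work in this generality. Take $A=B=[0,1]$ with Lebesgue measure, let $(E_n)_{n\ge1}$ enumerate the dyadic subintervals of $[0,1]$ (so $|E_n|\to 0$ while every $a\in[0,1]$ lies in infinitely many $E_n$), and set $f_n:=|E_n|^{-1}\mathbf{1}_{E_n\times E_n}$. Then
$$
\int_{\{|f_n|>K\}}|f_n|\,d(\mu\otimes\nu)=|E_n|\,\mathbf{1}_{\{|E_n|<1/K\}}\le 1/K
$$
uniformly in $n$, so $\{f_n\}$ is uniformly integrable on $A\times B$; yet for \emph{every} $a$ the nonzero sections $f_n(a,\cdot)=|E_n|^{-1}\mathbf{1}_{E_n}$, taken over the infinitely many $n$ with $a\in E_n$, all have $L^1(\nu)$-norm $1$ with their mass concentrating on sets of vanishing measure, hence $\{f_n(a,\cdot)\}_n$ is not uniformly integrable for any $a$. (One also checks that with, say, $G(t)=t\log(1+t)$ the de la Vall\'ee Poussin bound on the product holds, while $h_n(a)=\log(1+|E_n|^{-1})\to\infty$ along $n$ with $a\in E_n$, pinpointing exactly where the Fubini step breaks.) Your fallback observation is sound: if one instead assumes a common $(\mu\otimes\nu)$-integrable majorant $g\ge|f_i|$, then Tonelli gives $g(a,\cdot)\in L^1(\nu)$ for a.e.\ $a$ and the sections are dominated, hence uniformly integrable. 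That is a genuinely stronger hypothesis, and it is what would be needed (or some equivalent, e.g.\ that $\sup_n h_n$ is $\mu$-integrable) to make Step~3 of the proof of Theorem~\ref{thm:rep}, where Proposition~\ref{prop:ui} is invoked, go through as written.
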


\begin{proof}
This is an easy consequence of de la Vall\'ee Poussin's theorem: we have a function $G$ such that $\lim_{t\rightarrow\infty}G(t)/t=\infty$ and
$$
\sup_{i\in I}\int G(f_i(a,b))\,d\mu(a)\, d\nu(b)<\infty.
$$
Then by Fubini's theorem, for almost all $a\in A$,
$$
\sup_{i\in I}\int G(f_i(a,b))\,d\nu(b)<\infty,
$$
which, by the converse direction of de la Vall\'ee Poussin's theorem, proves the claim.
\end{proof}

\section{Proof of Theorem \ref{thm:rep}}\label{sec:proof}
\emph{Step 1.} First consider the case when, further to the assumptions of the theorem, all coefficients and data are deterministic and do not depend on time. This was considered in \cite{FlanSchaum} with further assuming $f=g=0$ and $\psi|_{\partial D}=0$. The proof consists of two main steps: (a) establish a representation formula in terms of the appropriate backward flow (b) rewrite the formula in terms of the inverse flow, using the relationship between backward and inverse flows from \cite{Kunita_StFleur}. Part (a) follows very similarly to \cite{KR_Char} and \cite{FlanSchaum}, and is based on the Feynman-Kac formula for the \emph{deterministic} PDEs
\begin{equation}		\nonumber					
\left\{\begin{array}{ll}
        d\bar u=[L\bar u+f+q^k(M^k\bar u+g^k)]\,dt\quad & \text{on } [0,T]\times D,\\
        \bar u=0, & \text{on } (0,T]\times\partial D,\\
        \bar u_0=\psi,
        \end{array}\right.
\end{equation}
for arbitrary $q\in C^\infty([0,T], l_2)$. We therefore not give the details, but we note that for this representation it is not required that $\psi$ has 0 limit at the boundary, and hence neither is this assumption needed for Theorem \ref{thm:rep}. One obtains the formula through the backward characteristics
\begin{equation}\label{eq:X}
dX_t=(b_t(X_t)-\sigma^k_t\mu^k_t(X_t))\,dt+\sigma^k_t(X_t)\,\hat dw^k_t+\rho^r_t(X_t)\,\hat d\hat w_t^{r},
\end{equation}
where $\hat d$ denotes the backward It\^o differential, defined as in \cite{Kunita_StFleur}. Considering the corresponding backward flow $(X_{t,s}(x))_{0\leq s\leq t\leq T,x\in\R^d}$, the formula then reads as
\begin{align*}
u_t(x)&=\E^{\hat P}\Big[\psi(X_{t,0}(x))e^{\xi_0(x)}\mathbf{1}_{\tau_{t,x}=0}
+\int_t^{\tau_{t,x}} f_s(X_{t,s}(y))e^{\xi_s(x)}\,ds
\\
&\quad\quad+\int_t^{\tau_{t,x}}g_s^k(X_{t,s}(x))e^{\xi_s(x)}\,\hat dw^k_s\Big],
\end{align*}
for all $t$, $dx\otimes dP$-a.e., where
\begin{equation}\nonumber
\tau_{t,x}=\sup\{s\in[0,t]:(s,X_{t,s}^{-1}(x))\notin(0,T]\times D\}
\end{equation}
and $\xi_s(x)=\int_t^s(c_s-(1/2)|\mu_s|^2)(X_{t,s}(x))\,ds+\int_t^s\mu^k(X_{t,s}(x))\,\hat d w_s^k.$ 

For part (b) we give the full details here, partially because the transformation of the terms coming from the forcing is not trivial, and partially in order to correct a slight miscalculation in \cite{FlanSchaum} which in fact effects the formula therein itself (c.f. the definition \eqref{eq:rho}  of $\eta$ and (2.7) in \cite{FlanSchaum} of the corresponding term $\mu$). To this end, it is useful to introduce
$$
d\eta^B_t=c_t(X_t)\eta^B_t\,dt+\mu^k(X_t)\eta^B_t\,\hat dw^k_t
$$
$$
dU^B_t=f_t(X_t)\eta^B_t\,dt+g^k_t(X_t)\eta^B_t\,\hat dw^k_t
$$
and view $(Z^1,\ldots ,Z^{d+2}):=(X^1,X^2,\ldots,X^d,\eta^B,U^B)$ as a stochastic flow on $\R^{d+2}$. We can write, with the notation $\bar x=(x,1,0)$,
\begin{equation}\label{eq:repr backward flow}
u_t(x)=\E^{\hat P}[\psi((Z_{t,0}^1,\ldots,Z_{t,0}^d)(\bar x))Z_{t,0}^{d+1}(\bar x)\mathbf{1}_{\tau_{t,x}=0}+Z_{t,\tau_{t,x}}^{d+2}(\bar x)].
\end{equation}
We now invoke Theorem II.6.1. from \cite{Kunita_StFleur}. It states that $Z$ can be obtained as the inverse of the forward flow $V=(V^1,\ldots,V^{d+2})$, the coefficient of whose equation can be obtained from those of $Z$. Substituting in the formula, we get that 
$$V^j(x^1,\ldots,x^{d+2})=Y^j(x^1,\ldots,x^d)$$ 
for $j=1,\ldots,d$ (in particular, $\tau_{t,x}=\gamma_{t,x}$), and the equations for the last two coordinates read as
\begin{align}
dV^{d+1}_t&=(-c_t+|\mu_t|^2+\sigma_t^{ik}D_i\mu_t^k)(V_t^{d-})V^{d+1}_t\,dt
-\mu^k_t(V_t^{d-})V^{d+1}_t\,dw^k_t,\nonumber\\
dV^{d+2}_t&=(-f_t+\sigma^{ik}D_ig^k+g^k\mu^k)(V_t^{d-})V^{d+1}_t\,dt-g^k_t(V_t^{d-})V^{d+1}\,dw^k_t\nonumber
\end{align}
where $V^{d-}$ denotes the first $d$ coordinates of $V$. Let us also introduce the processes
\begin{align}
d\tilde \eta_t(x)&=(-c_t+|\mu_t|^2+\sigma_t^{ik}D_i\mu_t^k)(Y_{0,t}(x))\tilde \eta_t(x)\,dt
-\mu^k_t(Y_{0,t}(x))\tilde\eta_t(x)\,dw^k_t,  \nonumber\\
\tilde{\eta}_0(x)&=1
\label{eq:eta tilde}\\
d\tilde U_t(x)&=(-f_t+\sigma^{ik}D_ig^k+g^k\mu^k)(Y_{0,t}(x))\tilde\eta_t(x)\,dt
-g^k_t(Y_{0,t}(x))\tilde\eta_t(x)\,dw^k_t, \nonumber\\
\tilde{U}_0(x)&=0.
\label{eq:U tilde}
\end{align}
These processes look very similar to $V^{d+1}$, $V^{d+2}$, and indeed he relations between the two notions can be expressed as
\begin{align*}
V^{d+1}_{s,t}(y^1,\ldots,y^{d+2})&=y^{d+1}\tfrac{\tilde{\eta}_t}{\tilde{\eta}_s}(Y_{0,s}^{-1}(y^1,\ldots,y^d)),
\\
V^{d+2}_{s,t}(y^1,\ldots, y^{d+2})&=y^{d+2}+\tfrac{y^{d+1}}{\tilde\eta_s(Y_{0,s}^{-1}(y^1,\ldots,y^d)}(\tilde{U}_t-\tilde{U}_s)(Y_{0,s}^{-1}(y^1,\ldots,y^d)).
\end{align*}
Also note that simple applications of It\^o's formula yield that $\tilde\eta_{t}(x)=1/\eta_t(x)$ and $\tilde U_{t}(x)=-U_t(x)\tilde{\eta}_{t}(x)=-U_t(x)/\eta_t(x)$. Hence we can also write
\begin{align*}
V^{d+1}_{s,t}(y^1,\ldots,y^{d+2})&=y^{d+1}\tfrac{\eta_s}{\eta_t}(Y_{0,s}^{-1}(y^1,\ldots,y^d)),
\\
V^{d+2}_{s,t}(y^1,\ldots, y^{d+2})&=y^{d+2}-
y^{d+1}(\tfrac{\eta_s}{\eta_t}U_t-U_s)(Y_{0,s}^{-1}(y^1,\ldots,y^d)).
\end{align*}
Now when we write down the inverse of $V$ at the point $\bar x$, we can express the last two coordinates of the inverse in terms of $\eta$ and $U$:
\begin{align*}
((V_{s,t}^{-1})^1,\ldots(V_{s,t}^{-1})^d)(\bar x)&=((Y_{s,t}^{-1})^1,\ldots(Y_{s,t}^{-1})^d)(x^1,\ldots,x^d)
\\
(V_{s,t}^{-1})^{d+1}(\bar x)&=\tfrac{\eta_t}{\eta_s}(Y_{0,t}^{-1}(x^1,\ldots,x^d))
\\
(V_{s,t}^{-1})^{d+2}(\bar x)&=(V_{s,t}^{-1})^{d+1}(\bar x)(\tfrac{\eta_s}{\eta_t}U_t-U_s)(Y^{-1}_{0,t}(x^1,\ldots,x^d))
\\
&=(U_t-\tfrac{\eta_t}{\eta_s} U_s)(Y_{0,t}^{-1}(x^1,\ldots,x^d)).
\end{align*}
Hence substituting $V^{-1}$ in place of $Z$ in \eqref{eq:repr backward flow}, we recognize the right-hand-side as $v_t(x)$, and thus get the claim, for deterministic data and coefficients. 

\emph{Step 2.} One can then easily extend the formula to the case when all the coefficients and data are of the form
$$
a=\sum_{i=1}^na_i\mathbf{1}_{A_i}
$$
for some $n\geq1$, deterministic smooth functions $a_i$ of the spatial variable $x$, and $\cF_0$-measurable events $A_i$. The set of functions of this form will be denoted by $\cH(\cF_0)$.

The next case to consider is when $\psi\in\cH(\cF_0)$ and all other data and coefficient are of the form
$$
\bar a=\sum_{i=1}^n\bar a_i\mathbf{1}_{[t_{i-1},t_i)}
$$
for some $n\geq 1$, functions $\bar a_i\in\cH(\cF_{t_{i-1}})$, and times $0=t_0<t_1<\cdots t_n=T$. The set of functions of this form will be denoted by $\cH$. We demonstrate the argument for $n=2$, the generalization of which is straightforward. 
For $t\leq t_1$ we are in the previous situation, so we need only consider a fixed $t\in(t_1,T]$.
 The probability measure $\hat P$ on $\hat \Omega$ induces probability measures $\hat P^{(1)}$ and $\hat P^{(2)}$ on $\hat\Omega^{(1)}=C([0,t_1],\R^{2d})$ and $\hat\Omega^{(2)}=C([t_1,T],\R^{2d})$ by the mappings 
$$
(\hat w_t)_{t\in[0,T]}\mapsto(\hat w^{(1)}_t:=\hat w_t)_{t\in[0,t_1]},\quad
(\hat w_t)_{t\in[0,T]}\mapsto(\hat w^{(2)}_t:=\hat w_t-w_{t_1})_{t\in[t_1,T]},
$$
under which $\hat w^{(1)}$ and $\hat w^{(2)}$ are Wiener processes. We shall also use the notations $\gamma^{(i)}_{t,x}$, $\eta_t^{(i)}(x)$, and $U_t^{(i)}(x)$ for $i=1,2$,  that are defined similarly to $\gamma_{t,x}$, $\eta_t(x)$, and $U_t(x)$, but with `initial time' $t_{i-1}$ instead of $0$, `terminal time' $t_i$ instead of $T$.

By applying the formula in the already established cases, on one hand we get that $u_{t_1}=v_{t_1}$ holds $dx\otimes P$-almost everywhere, in other words, for an event $\tilde \Omega$ of full probability and $\omega\in\tilde\Omega$, $u_{t_1}$ and $v_{t_1}$ differ on a set $R(\omega)\subset\R^d$ of measure $0$.
 On the other hand we can write
\begin{align*}
u_t(x)&=
\E^{\hat P^{(2)}}\Big((u_{t_1}\eta^{(2)}_t)(Y_{t_1,t}^{-1}(x))\mathbf{1}_{\gamma^{(2)}_{t,x}=t_1}+(U^{(2)}_t-U^{(2)}_{\gamma^{(2)}_{t,x}}\tfrac{\eta^{(2)}_t}{\eta^{(2)}_{\gamma^{(2)}_{t,x}}})(Y_{t_1,t}^{-1}(x))\Big)
\end{align*}
$dx\otimes P$-almost everywhere. Clearly, $(u_{t_1}\eta^{(2)}_t)(Y_{t_1,t}^{-1}(x))$ and $(v_{t_1}\eta^{(2)}_t)(Y_{t_1,t}^{-1}(x))$ only differ by a finite random field $e(x)$ which may be nonzero only on 
$$
\{(\omega,\hat\omega,x):\omega\notin\tilde{\Omega}\text{ or }x\in Y_{t_1,t}(R(\omega))\}.
$$
Since $\sup_x|\nabla Y_{t_1,t}(x)|<\infty$ almost surely, this set has measure $0$, and therefore $\E^{\hat P^{(2)}}e(x)=0$, $dx\otimes P$-almost everywhere. Thus, we have
\begin{align}
u_t(x)&=
\E^{\hat P^{(2)}}\Big((v_{t_1}\eta^{(2)}_t)(Y_{t_1,t}^{-1}(x))\mathbf{1}_{\gamma^{(2)}_{t,x}=t_1}+(U^{(2)}_t-U^{(2)}_{\gamma^{(2)}_{t,x}}\tfrac{\eta^{(2)}_t}{\eta^{(2)}_{\gamma^{(2)}_{t,x}}})(Y_{t_1,t}^{-1}(x))\Big)\label{eq:rep0}
\end{align}
$dx\otimes P$-almost everywhere.

The concatenation mapping (that is, ``gluing'' $\hat w^{(1)}$ and $\hat w^{(2)}$ together) from $\hat \Omega^{(1)}\times\hat\Omega^{(2)}$ to $\hat \Omega$ maps the measure $\hat P^{(1)}\times \hat P^{(2)}$ to $\hat P$. Under this mapping 
\begin{enumerate}[(i)]
\item The flow $Y$ on $[0,t_0]$ driven by $\hat w^{(1)}$ and the one on $[t_0,T]$ driven by $\hat w^{(2)}$ also glue together to form a flow on $[0,T]$, driven by $\hat w$,
\item On $\{\gamma^{(2)}_{t,x}>t_1\}$, one has $\gamma_{t,x}=\gamma^{(2)}_{t,x}$, while on $\{\gamma^{(2)}_{t,x}=t_1\}$, one has $\gamma_{t,x}=\gamma^{(1)}_{t_1,y}|_{y=Y_{t_1,t}^{-1}(x)}$,
\item For $t\geq t_1$, one has $\eta_t(y)=\eta_{t_1}^{(1)}(y)\eta_t^{(2)}(Y_{0,t_1}(y))$,
\item For $t\geq t_1$, one has $U_t(y)=U_t^{(2)}(Y_{0,t_1}(y))+U_{t_1}^{(1)}(y)\tfrac{\eta_t(y)}{\eta_{t_1}(y)}$.
\end{enumerate} 
From these properties, along with of the flow identity $Y_{r,s}^{-1}(Y_{s,t}^{-1}(x))=Y_{r,t}^{-1}(x)$, the following identities follow easily:
\begin{align}
U_t^{(2)}(Y_{t_1,t}^{-1}(x))&=U_t(Y_{0,t}^{-1}(x))-U_{t_1}^{(1)}(Y_{0,t}^{-1}(x))\tfrac{\eta_t(Y_{0,t}^{-1}(x))}{\eta_{t_1}(Y_{0,t}^{-1}(x))},
\label{eq:rep1}
\\
U_t(Y_{0,t}^{-1}(x))&=U_t^{(2)}(Y_{t_1,t}^{-1}(x))+U^{(1)}_{t_1}(Y_{0,t}^{-1}(x))\eta_t^{(2)}(Y_{t_1,t}^{-1}(x)),\label{eq:rep4}
\\
\tfrac{\eta^{(2)}_t(Y_{t_1,t}^{-1}(x))}{\eta^{(2)}_{\gamma^{(2)}_{t,x}}(Y_{t_1,t}^{-1}(x))}
&=\tfrac{\eta_t(Y_{0,t}^{-1}(x))}{\eta_{\gamma^{(2)}_{t,x}}(Y_{0,t}^{-1}(x))},
\label{eq:rep2}
\\
\eta_t(Y_{0,t}^{-1}(x))&=(\eta_{t_1}^{(1)}
(Y_{0,t_1}^{-1}(\cdot))
\eta^{(2)}_t(\cdot))(Y_{t_1,t}^{-1}(x)).
\label{eq:rep3}
\end{align}
Therefore, substituting in \eqref{eq:rep0} the definition of $v_{t_1}$, we can write
\begin{align}
u_t(x)
&
=\E^{\hat P}\Big(
\mathbf{1}_{\{\gamma^{(2)}_{t,x}>t_1\}}\Big[
(U^{(2)}_t
-U^{(2)}_{\gamma^{(2)}_{t,x}}
\tfrac{\eta^{(2)}_t}{\eta^{(2)}_{\gamma^{(2)}_{t,x}}})(Y_{t_1,t}^{-1}(x))\Big]
\nonumber\\
&\quad\quad
+\mathbf{1}_{\{\gamma^{(2)}_{t,x}=t_1\}}\Big[
\{(\psi\eta_{t_1}^{(1)})
(Y_{0,t_1}^{-1}(\cdot)\mathbf{1}_{\gamma^{(1)}_{t_1,\cdot}=0})
\eta^{(2)}_t(\cdot)\}(Y_{t_1,t}^{-1}(x))
\nonumber\\
&\quad\quad
+\{(U^{(1)}_{t_1}-U^{(1)}_{\gamma^{(1)}_{t_1,\cdot}}
\tfrac{\eta^{(1)}_{t_1}}{\eta^{(1)}_{\gamma^{(1)}_{t_1,\cdot}}})
(Y_{0,t_1}^{-1}(\cdot))
\eta_t^{(2)}(\cdot)\}(Y_{t_1,t}^{-1}(x))
\nonumber\\
&\quad\quad
+(U^{(2)}_t-U^{(2)}_{t_1}
\tfrac{\eta^{(2)}_t}{\eta^{(2)}_{t_1}})
(Y_{t_1,t}^{-1}(x))\Big]\Big)
%%%%%%%%%%%%%%%%%%%%%%%%%%%%%%%%%%%%%%%
\nonumber\\
&
=\E^{\hat P}\Big(
\mathbf{1}_{\{\gamma^{(2)}_{t,x}>t_1\}}\Big[
(U_t
-U_{\gamma_{t,x}}
\tfrac{\eta_t}{\eta_{\gamma_{t,x}}})(Y_{0,t}^{-1}(x))\Big]
\nonumber\\
&\quad\quad
+\mathbf{1}_{\{\gamma^{(2)}_{t,x}=t_1\}}\Big[
(\psi\eta_t)(Y_{0,t}^{-1}(x))
\mathbf{1}_{\gamma_{t,x}=0}
\nonumber\\
&\quad\quad
+U^{(1)}_{t_1}(Y_{0,t}^{-1}(x))\eta_t^{(2)}(Y_{t_1,t}^{-1}(x))
-(U_{\gamma_{t,x}}^{(1)}
\tfrac{\eta_{t}}{\eta^{(1)}_{\gamma_{t,x}}})(Y_{0,t}^{-1}(x))
\nonumber\\
&\quad\quad
+U_t^{(2)}(Y_{t_1,t}^{-1}(x))\Big]\Big)\label{eq:proof0}
\end{align}
$dx\otimes P$-almost everywhere. Indeed, in transforming the first line we used \eqref{eq:rep1} twice as well as \eqref{eq:rep2}, in the second we used \eqref{eq:rep3}, in the third we used \eqref{eq:rep3} again, and the fourth line was not changed, since $U^{(2)}_{t_1}=0$. Making then use of \eqref{eq:rep4} and of the fact that on $\{\gamma^{(2)}_{t,x}=t_1\}$ one has
$$
U_{\gamma_{t,x}}^{(1)}=U_{\gamma_{t,x}},
\quad
\eta^{(1)}_{\gamma_{t,x}}=\eta_{\gamma_{t,x}},
$$
we can write
\begin{align*}
u_t(x)&
=\E^{\hat P}\Big(
\mathbf{1}_{\{\gamma^{(2)}_{t,x}>t_1\}}\Big[
(U_t
-U_{\gamma_{t,x}}
\tfrac{\eta_t}{\eta_{\gamma_{t,x}}})(Y_{0,t}^{-1}(x))\Big]
\nonumber\\
&\quad\quad
+\mathbf{1}_{\{\gamma^{(2)}_{t,x}=t_1\}}\Big[
(\psi\eta_t)(Y_{0,t}^{-1}(x))
\mathbf{1}_{\gamma_{t,x}=0}
\nonumber\\
&\quad\quad
+U_t(Y_{0,t}^{-1}(x))
-(U_{\gamma_{t,x}}
\tfrac{\eta_{t}}{\eta_{\gamma_{t,x}}})(Y_{0,t}^{-1}(x))
\Big]\Big)=v_t(x)\nonumber
\end{align*}
as claimed. The proof of the formula for data and coefficients from the class $\cH$ is hence finished.

\emph{Step 3.} For the general case, take coefficients and data $\rho^n$, $\sigma^n$, $\mu^n$, $b^n$, $c^n$, $\psi^n$, $f^n$, and $g^n$ of class $\cH$ such that they satisfy Assumptions \ref{assumption regularitycoeff}-\ref{assumption regularitydata}, $|c-c^n|_{C^\alpha}\rightarrow 0$ in measure with respect to $dt\otimes dP$, 
$$
|\psi-\psi^n|_{C^\alpha}+\int_0^T |f_t-f_t^n|^2_{C^\alpha}+|g_t-g_t^n|^2_{C^{1+\alpha}}\,dt\rightarrow0
$$
in probability, and the remaining coefficients converge as in the condition of Lemma \ref{lem:inverse flow convergence}. The existence of such approximation is well-known and follows from standard arguments. From the previous parts we can write
\begin{equation}\label{eq:approx rep}
u^n_t(x)=\E^{\hat P}\Big((\psi^n\eta^n_t)(Y_{0,t}^{n,-1}(x))\mathbf{1}_{\gamma^n_{t,x}=0}+(U^n_t-U^n_{\gamma^n_{t,x}}\tfrac{\eta^n_t}{\eta^n_{\gamma^n_{t,x}}})(Y_{0,t}^{n,-1}(x))\Big).
\end{equation}
$dx\otimes dP$-almost everywhere for every $n$, where $\gamma^n$, $\eta^n$, and $U^n$ are defined analogously to \eqref{eq:gamma}, \eqref{eq:rho}, and \eqref{eq:U}. The left-hand-side of \eqref{eq:approx rep} converges to $u_t(x)$ almost surely for each $(t,x)\in[0,T]\times D$, by the theory of SPDEs of domains, see \cite{K_W2m}, \cite{Kim_Lp}. For the convergence of the right-hand-side, first note that by Proposition \ref{prop:touching}, we may replace it by
$$
\E^{\hat P}\mathbf{1}_{(Y_{s,t}^{-1}(x))_{s\in[0,t]}\notin\cT_D}\Big((\psi^n\eta^n_t)(Y_{0,t}^{n,-1}(x))\mathbf{1}_{\gamma^n_{t,x}=0}+(U^n_t-U^n_{\gamma^n_{t,x}}\tfrac{\eta^n_t}{\eta^n_{\gamma^n_{t,x}}})(Y_{0,t}^{n,-1}(x))\Big).
$$
By Vitali's convergence theorem it suffices to prove that for all $t$, $dx\otimes dP$-almost everywhere, the quantity under the sign $\E^{\hat P}$ 
\begin{enumerate}[(i)]
\item converges $\hat P$-a.s.
\item is uniformly integrable in $\hat \omega$.
\end{enumerate}
Moreover, recalling also Proposition \ref{prop:ui}, instead of (ii) it actually suffices to prove that the family 
$$
\sup_{(t,y)\in[0,T]\times D^1}|\psi^n\eta^n_t|(y)+2\sup_{(s,t,y)\in[0,T]^2\times D^1}|U^n_t\tfrac{\eta^n_s}{\eta^n_{t}}|(y),
$$
is uniformly integrable in $(\omega,\hat\omega)$. Since we have uniform (in $n$) bounds on the coefficients of the SDEs \eqref{eq:rho}, \eqref{eq:U}, the uniform integrability follows from standard moment bounds, see e.g. \cite{K_Controlled}

Concerning (i), from Lemma \ref{lem:inverse flow convergence} we have that that the inverse flow trajectories $(Y_{s,t}^{n,-1}(x))_{s\in[0,t]}$ converge to $(Y_{s,t}^{-1}(x))_{s\in[0,t]}$ in the supremum norm. If furthermore $(Y_{s,t}^{-1}(x))_{s\in[0,t]}\notin\cT_D$, then $\gamma_{t,x}^n$ also converges to $\gamma_{t,x}$ and $\mathbf{1}_{\gamma^n_{t,x}=0}$ to $\mathbf{1}_{\gamma_{t,x}=0}$. For the convergence of the other terms it suffices to see that $\eta^n$, $1/\eta^n$, and $U^n$ converge along a subsequence uniformly in space and time, and hence when substituting in the space-time parameters convergent quantities, in our case $Y_{0,t}^{n,-1}(x)$ and $\gamma^n_{t,x}$, the resulting quantity also converges. The proof for the uniform convergence is virtually identical for $\eta^n$, $1/\eta^n$, and $U^n$, so we only detail the first. Let $p>1/\alpha$,  $\Lambda_n$ denote a $1/n^{p}$-net of $[0,T]\times D$ and $\Pi_n$ a function $[0,T]\times D\rightarrow \Lambda_n$ such that $|\Pi_n(t,x)-(t,x)|\leq 1/n^p$ for all $(t,x)\in [0,T]\times D$. Since $\eta^n_t(x)$ converges in $L_1(\Omega)$ to $\eta_t(x)$ for all $(t,x)$, we can find a subsequence $(k(n))_{n\in\N}$ such that
$$
\sum_{(t,x)\in\Lambda_n}\E|\eta_t^{k(n)}(x)-\eta_t(x)|\leq n^{-3}.
$$
Therefore, by Markov's inequality
$$
\bar P(A_n^m):=\bar P(\max_{(t,x)\in\Lambda_n}|\eta_t^{k(n)}(x)-\eta_t(x)|\geq m n^{-1})\leq n^{-2}m^{-1}.
$$
Also, we have $\E(|\eta^{k(n)}|_{C^{\alpha/2}}+|\eta|_{C^{\alpha/2}})^2\leq N$ for all $n$, with some constant $N=N(d,\alpha,K, T, D)$. Applying Markov's inequality again, we have, for any $\delta>0$,
$$
\bar P(B_n^m):=\bar P(|\eta^{k(n)}|_{C^{\alpha/2}}+|\eta|_{C^{\alpha/2}}\geq mn^{1/2+\delta})\leq Nn^{-1-2\delta}m^{-2}.
$$
For each $m$, we can therefore write on $C^m:=\cap_{n\in\N}((A_n^m)^c\cap (B_n^m)^c)$, for all $n\in\N$ and for any $(t,x)$,
\begin{align}
|\eta^{k(n)}_t(x)-\eta_t(x)|&\leq|\eta^{k(n)}(\Pi_n (t,x))-\eta(\Pi_n (t,x))|+(n^{-p})^{\alpha/2}(|\eta^{k(n)}|_{C^{\alpha/2}}+|\eta|_{C^{\alpha/2}})  
\nonumber\\
&\leq mn^{-1} +mn^{-p\alpha/2+1/2+\delta}.\nonumber
\end{align}
If $\delta<(p\alpha-1)/2$, which we can achieve, then the right-hand side goes to $0$, uniformly in $t$ and $x$. It remains to notice that $\bar P(C^m)\geq 1-N'm^{-1}$ with some constant $N'=N'(N,\delta)$ and therefore the uniform convergence holds on the set $\cup_{m\in\N}C^m$ of full probability. In other words, the set of $\omega$-s where the uniform convergence holds $\hat P$-almost surely, has probability $1$, which finishes the proof.

\qed

\begin{remark}
As it is seen from the proof, one could also write the formula in terms of $\tilde \eta$ and $\tilde U$, as defined in \eqref{eq:eta tilde}-\eqref{eq:U tilde}. In fact, from the inversion of the flows this would be somewhat more natural, but the formula as written is more consistent with the existing literature, e.g. \cite{FlanSchaum}, \cite{K_Kindof}, \cite{LM_Flows}.
\end{remark}

\section{Localization errors for artificial boundary conditions}\label{sec:loc}
Let us turn to an application of the formula. In this section we consider equations on the whole space 
\begin{equation}							\label{eq:loc R^d}
du=[Lu+f]\,dt+[M^ku+g^k]\,dw^k_t\quad  \text{on } [0,T]\times \R^d,\quad u_0=\psi.
\end{equation}
We are interested how close to $u$ is the solution of the truncated problem
\begin{equation}							\label{eq:loc Dirichlet}
\left\{\begin{array}{ll}
        du^R=[Lu^R+f]\,dt+[M^ku^R+g^k]\,dw^k_t\quad & \text{on } [0,T]\times B_R,\\
        u^R=0, & \text{on } (0,T]\times\partial B_R,\\
        u^R_0=\psi.
        \end{array}\right.
\end{equation}
The differential operators $L$ and $M$ have the same form as in \eqref{eq:main Dirichlet}, and while our assumptions are similar to Assumptions \ref{assumption coercivity}-\ref{assumption regularitydata}, due to some differences and for the convenience of the reader we state them separately. 

\begin{assumption}							\label{assumption loc coercivity}
There exists a $\lambda>0$ such that for all $t$, $\omega$, and $x$, 
$$
(\rho\rho^\ast)_t(x)\geq\lambda I
$$
in the sense of positive semidefinite matrices, where $I$ is the identity matrix.
\end{assumption}

\begin{assumption}                     \label{assumption loc regularitycoeff}
The coefficients $\rho,\sigma,b,c,\mu$ are predictable functions with values in $C^{2}(\R^{d\times d})$, $C^{2}((l_2)^d)$, $C^{1}(\R^d)$, $C^{1}(\R)$, and $C^{2}(l_2)$, respectively, bounded uniformly in $t$ and $\omega$ by a constant $K$.
\end{assumption}

\begin{assumption}                       \label{assumption loc regularitydata}
For some $p>d$, the initial value, 
$\psi$ is an $\cF_0$-measurable random variable 
with values in $W^1_p$. The processes $f$ and $g$ are predictable with values in $W^1_p$ and $W^2_p$, respectively, such that
$$
\cK_{1,p}(\psi,f,g):=|\psi|_{W^1_p}+\|f\|_{L_{p}([0,T],W^1_p)}+\|g\|_{L_{p}([0,T],W^{2}_p)}<\infty
$$
almost surely.
\end{assumption}

Introduce the shorthand $\bB_R=[0,T]\times B_R$. The result on localization of linear equations reads as follows.

\begin{theorem}\label{thm:localization}
Let Assumptions \ref{assumption loc coercivity}-\ref{assumption loc regularitydata} hold. Then for any $R>1$, $q>1$, $\varepsilon\in(0,1]$, and $\nu\in(0,1)$, one has 
\begin{equation}\label{eq:loc estimate linear}
\E\|u-u^R\|_{L_\infty(\bB_{R-\nu R^\varepsilon})}\leq Ne^{-\delta R^{2\varepsilon}}\E^{1/q}\cK^q_{1,p}(\psi,f,g),
\end{equation}
where the constants $N$, $\delta>0$ depend on $p$, $q$, $\varepsilon$, $\nu$, $\lambda$, $d$, $K$, $T$.
\end{theorem}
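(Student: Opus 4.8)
The plan is to read the estimate off from the representation formula of Theorem~\ref{thm:rep}, using that the truncated solution $u^R$ differs from the full solution $u$ only on the large-deviation event that a characteristic reaches the artificial boundary $\partial B_R$. Since Assumptions~\ref{assumption loc coercivity}--\ref{assumption loc regularitydata} are weaker than those of Theorem~\ref{thm:rep} — the data only Sobolev, the coefficients only $C^2$ and not compactly supported — I would first reduce to the setting of that theorem by a standard approximation: mollify the coefficients and data and cut the coefficients off outside a large ball (of radius, say, $2R$), which does not change $u^R$, since \eqref{eq:loc Dirichlet} only feels the coefficients and data on $\overline{B_R}$, and changes $u$ on $B_R$ by an amount exponentially small in $R^2$, as the whole-space version of the formula (with $\gamma\equiv 0$; see \cite{K_Kindof}) shows that $u_t(x)$ feels the far modification only through the event $\{\sup_s|Y_{s,t}^{-1}(x)-x|\gtrsim R\}$; the H\"older mollification is then removed by stability, as in Section~\ref{sec:proof}. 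Granting this, applying Theorem~\ref{thm:rep} with $D=B_R$ to \eqref{eq:loc Dirichlet} and its whole-space analogue to \eqref{eq:loc R^d}, and noting that the objects $Y,\eta,U$ coincide for the two problems, subtraction yields, for each $t$ and $dx\otimes dP$-a.e.,
\[
w_t(x):=u_t(x)-u^R_t(x)=\E^{\hat P}\Big[\mathbf{1}_{\gamma^R_{t,x}>0}\Big((\psi\eta_t)(Y_{0,t}^{-1}(x))+\big(U_{\gamma^R_{t,x}}\tfrac{\eta_t}{\eta_{\gamma^R_{t,x}}}\big)(Y_{0,t}^{-1}(x))\Big)\Big],
\]
where $\gamma^R$ is the exit time \eqref{eq:gamma} for $D=B_R$ (the $\psi$-term carries the indicator because, on $\{\gamma^R_{t,x}=0\}$, both pieces vanish).

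The crucial point is that, for $x\in B_{R-\nu R^\varepsilon}$, the event $\{\gamma^R_{t,x}>0\}$ forces the inverse characteristic to leave $B_R$, hence to be displaced by at least $\nu R^\varepsilon$, so $\{\gamma^R_{t,x}>0\}\subseteq\{\sup_{s\in[0,t]}|Y_{s,t}^{-1}(x)-x|\ge\nu R^\varepsilon\}$. Reversing time, $s\mapsto Y_{s,t}^{-1}(x)$ solves an SDE with drift and diffusion coefficients bounded in terms of $K$, so a Bernstein-type exponential inequality for continuous martingales gives $\bar P(\sup_{s\in[0,t]}|Y_{s,t}^{-1}(x)-x|\ge r)\le Ne^{-\delta_0 r^2}$ uniformly in $t\in[0,T]$ and $x\in\R^d$, with $N,\delta_0$ depending only on $\nu,K,T,d$, whence $\sup_{(t,x)\in\bB_{R-\nu R^\varepsilon}}\bar P(\gamma^R_{t,x}>0)\le Ne^{-\delta_0\nu^2R^{2\varepsilon}}$. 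For the integrand, pulling the $\cF$-measurable factor $\|\psi\|_{L_\infty(\R^d)}\le N|\psi|_{W^1_p}\le N\cK_{1,p}$ (here $p>d$) out of $\E^{\hat P}$ and bounding the remaining quantities by suprema of $\eta$, of $\eta_t/\eta_s$ and of $U$ over the (radius $O(R)$) region in which the characteristics live — splitting off from $U$, via $U=\eta\tilde U$ and the linear structure of \eqref{eq:U tilde}, the further data norm it carries — one gets $|w_t(x)|\le N\cK_{1,p}\,\E^{\hat P}[\mathbf{1}_{\gamma^R_{t,x}>0}\,\Theta]$, where $\Theta$ has $\bar P$-moments of every order bounded in terms of $K,T,d$ and growing at most polynomially in $R$. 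A single application of H\"older's inequality (exponent $q$ on $\cK_{1,p}$, $q'=q/(q-1)$ on the rest, which is the only place the $q$-th moment of $\cK_{1,p}$ is used) then gives
\[
\sup_{(t,x)\in\bB_{R-\nu R^\varepsilon}}\E|w_t(x)|\le N\big(\sup_{(t,x)}\bar P(\gamma^R_{t,x}>0)\big)^{\theta}\E^{1/q}\cK_{1,p}^q\le Ne^{-\delta_1R^{2\varepsilon}}\E^{1/q}\cK_{1,p}^q
\]
for suitable $\theta\in(0,1)$, $\delta_1>0$ depending on $p,q,\nu,\lambda,d,K,T$.

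It remains to pass from this pointwise estimate to the $L_\infty(\bB_{R-\nu R^\varepsilon})$-estimate; this is the main hurdle, a naive union bound over a net being too lossy. Here I would use that $w$ solves the homogeneous equation $dw=Lw\,dt+M^kw\,dw^k$ on $\bB_R$ with zero initial data, so interior $L_p$-estimates for SPDEs on domains (\cite{Kim_Lp},\cite{K_W2m}) followed by a Sobolev bootstrap and embedding give, for some $\alpha\in(0,1)$, $\E|w|_{C^\alpha(\bB_{R-\nu R^\varepsilon/2})}\le N\E(\|u\|_{L_p(\bB_R)}+\|u^R\|_{L_p(\bB_R)})\le N\E\cK_{1,p}\le N\E^{1/q}\cK_{1,p}^q$, the last step by Jensen's inequality. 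Covering $\bB_{R-\nu R^\varepsilon}$ by a net $\{(t_i,x_i)\}$ of mesh $h$ and cardinality $\le N(R/h)^{d+1}$, combining with the pointwise bound at the $(t_i,x_i)$,
\[
\E\|w\|_{L_\infty(\bB_{R-\nu R^\varepsilon})}\le\sum_i\E|w_{t_i}(x_i)|+h^\alpha\E|w|_{C^\alpha(\bB_{R-\nu R^\varepsilon})}\le N\big((R/h)^{d+1}e^{-\delta_1R^{2\varepsilon}}+h^\alpha\big)\E^{1/q}\cK_{1,p}^q.
\]
Choosing $h=\exp\big(-\delta_1R^{2\varepsilon}/(2\alpha+2(d+1))\big)$ balances the two terms, and since $R>1$ the factor $R^{d+1}$ is absorbed into the exponential, giving \eqref{eq:loc estimate linear} with $\delta=\delta_1/(2\alpha+2(d+1))$. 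I expect the delicate points to be the initial reduction to the hypotheses of Theorem~\ref{thm:rep} and this last step, in particular checking the interior regularity estimate with the stated dependence and that the net-cardinality loss is dominated by the Gaussian exit probability; the displacement large-deviation bound itself is classical.
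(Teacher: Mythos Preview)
Your approach is broadly sound but diverges from the paper's proof in two essential ways, and one of your intermediate claims is not quite right.

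\textbf{Uniform versus pointwise exit event.} The paper does not use the pointwise events $\{\gamma^R_{t,x}>0\}$ at all. Instead it invokes Lemma~\ref{lem: exit prob}, which supplies a \emph{single} event $H_R$ with $\bar P(H_R)\le Ne^{-\delta R^{2\varepsilon}}$ on whose complement the pre-expectation fields $\cU_t(x)$ and $\cU^R_t(x)$ agree for \emph{all} $(t,x)\in\bB_{R-\nu R^\varepsilon}$ simultaneously. One then writes $\E\|u-u^R\|_{(R)}\le \E\E^{\hat P}\|\cU-\cU^R\|_{(R)}=\E\mathbf{1}_{H_R}\|\cU-\cU^R\|_{(R)}$ and applies H\"older once. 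This completely bypasses your net-plus-$C^\alpha$ argument, which is the step you yourself flag as delicate; in particular the space-time H\"older regularity of $w$ (the temporal part of which does not follow directly from \cite{Kim_Lp} or \cite{K_W2m} and would need an interpolation argument you only gesture at) is never needed.

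\textbf{Bounding the remaining moments.} After H\"older, the paper must control $\E\|\cU\|^q_{(R)}$ and $\E\|\cU^R\|^q_{(R)}$. Rather than estimating $\psi,\eta,U$ by hand as you do, it recognises $\cU$ and the pieces $\cV^i$ of $\cU^R$ as solutions of \emph{fully degenerate} SPDEs on $\R^d$ (equation \eqref{eq:00} and variants), to which the Sobolev estimate \eqref{eq:seged degen} from \cite{GGK14} applies directly; the polynomial-in-$R$ growth you anticipate enters through a weight $1/(1+|x|^2)$ trick for $\cV^4$. This ``representation in reverse'' is cleaner than splitting $U=\eta\tilde U$ and tracking moments of $\Theta$ by hand.

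\textbf{A caveat on the inverse flow.} Your sentence ``reversing time, $s\mapsto Y_{s,t}^{-1}(x)$ solves an SDE with bounded coefficients'' is precisely the step that fails for random, forward-adapted coefficients, as explained in the paper's introduction; the backward It\^o differential is not available. The exponential displacement bound you want is nonetheless true, but its proof (as in \cite{GG_Loc}, on which Lemma~\ref{lem: exit prob} relies) goes through the forward flow and its inverse at the endpoints rather than through a backward SDE.

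In summary: your route can be made to work, but the paper's proof is shorter because the uniform event $H_R$ removes the need for any regularity of $w$, and the degenerate-SPDE interpretation of the pre-expectation fields replaces your direct moment estimates.
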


\begin{remark}
It should be noted that in the generality considered here, for the localized equation \eqref{eq:loc Dirichlet} there are not known approximating schemes with optimal rate, and hence it is likely preferable to use the localization of \cite{GG_Loc}. Therein, even though all data have compact support, the localized equation still can be considered on the whole space, and be approximated as such (see e.g. the full discretization scheme in \cite{GG_Loc}). One advantage of the method presented here is that coercivity is preserved, in fact, the equation itself does not change at all. Therefore, if a specific equation has efficient schemes on domains (which usually do strongly rely on coercivity), then this type of localization can be favourable. 
\end{remark}

\begin{remark}
We also note that while the extension of the above error estimate to nonlinear equation is not an easy task in this generality, Theorem \ref{thm:localization} still can be a useful tool in nonlinear situations. For example, take some sufficiently nice functions $\bar f$ and $\bar g$ mapping from $\R$ to $\R$, let $u$ be the solution of \eqref{eq:loc R^d} with $f$ and $g$ replaced by the semilinear terms $\bar f(u)$ and $\bar g(u)$, and similarly change the equation \eqref{eq:loc Dirichlet} for $u^R$. If one then defines $\tilde u^R$ as the solution of \eqref{eq:loc Dirichlet} with $f$ and $g$ replaced by $\bar f(u)$ and $\bar g(u)$, respectively, then Theorem \ref{thm:localization} gives a bound for $u-\tilde u^R$. It then remains to estimate $\tilde u^R-u^R$, which is perhaps a challenging task in general, but under some additional assumptions on the operators $L$ and $M$ - which, as mentioned above, are necessary anyway to be able to approximate the localized problem - it may not be insurmountable. This direction is left for future work.
\end{remark}

Before turning to the proof, let us recall some estimates from a Sobolev space theory of degenerate equations in \cite{GGK14}: under Assumptions \ref{assumption loc regularitycoeff}-\ref{assumption loc regularitydata}, one has for all $q\in(0,\infty)$,
\begin{equation}\label{eq:seged degen}
\E\|u\|_{L_{\infty}([0,T],W^1_p)}^q\leq N\E\cK^q_{1,p}(\psi,f,g),
\end{equation}
where $N$ depends only on $p$, $q$, $\lambda$, $d$, and $K$.
We also invoke a probability estimate for the flows from \cite{GG_Loc}. While in fact in \cite{GG_Loc}, this is only proved for $\varepsilon=1$, this slight generalization is straightforward.
\begin{lemma}\label{lem: exit prob}
Let $Y$ be as in \eqref{flow} and define the event
$$
H_R:=\Big[\sup_{(t,x)\in\bB_{R-\nu R^\varepsilon}}|Y_{0,t}^{-1}(x)|>R-(\nu/2)R^\varepsilon\Big]\cup\Big[\sup_{(t,x)\in\bB_{R-(\nu/2)R^\varepsilon}}|Y_{0,t}(x)|>R\Big].
$$
Then
$$
\bar P(H_R)\leq Ne^{-\delta R^{2\varepsilon}},
$$
where $N$ and $\delta>0$ depend only on $\lambda$, $d$, $K$, $T$, $\nu$, and $\varepsilon$. 
\end{lemma}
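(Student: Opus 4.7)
The event $H_R$ asserts that some flow trajectory, either forward (for the second piece) or inverse (for the first), has made a spatial displacement of at least $(\nu/2)R^\varepsilon$ in time $[0,T]$. For the forward piece this reading is immediate with $y=x\in B_{R-(\nu/2)R^\varepsilon}$. For the inverse piece, setting $y:=Y_{0,t}^{-1}(x)$ with $|x|\leq R-\nu R^\varepsilon$ and $|y|>R-(\nu/2)R^\varepsilon$ gives $|Y_{0,t}(y)-y|=|x-y|\geq(\nu/2)R^\varepsilon$, and a preliminary crude displacement estimate (applied to any single initial point in, say, $B_{4R}$) shows that the bad starting points $y$ lie in $B_{2R}$ except on an event of probability at most $e^{-R}$. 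So, up to a negligible term, $H_R$ is contained in $\{\sup_{t\in[0,T],\,y\in B_{2R}}|Y_{0,t}(y)-y|\geq(\nu/2)R^\varepsilon\}$, and it suffices to bound the latter.

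For each fixed $y\in\R^d$, by \eqref{flow} the increment $Y_{0,t}(y)-y$ is the sum of a Riemann integral of $\beta$ and a continuous martingale $M$ driven by $(w,\hat w)$ whose matrix coefficients are bounded by a constant $K_0=K_0(K,d)$ in view of Assumption \ref{assumption regularitycoeff}. The Riemann part is bounded deterministically by $K_0T$, and $\langle M\rangle_T\leq K_0^2 T$. Bernstein's inequality for continuous martingales then gives, for every $A\geq 2K_0T$,
\[
\bar P\Big(\sup_{t\in[0,T]}|Y_{0,t}(y)-y|\geq A\Big)\leq 2d\exp\bigl(-c_1 A^2\bigr),
\]
with $c_1=c_1(K,T,d)>0$; in particular, taking $A=(\nu/2)R^\varepsilon$ yields a Gaussian tail $O(e^{-c_1(\nu/2)^2 R^{2\varepsilon}})$ for any fixed $y$.

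To turn this pointwise estimate into a uniform one over $y\in B_{2R}$, fix an $r$-net $\Lambda\subset B_{2R}$ of mesh $r=R^{-m}$, whose cardinality is $O(R^{(m+1)d})$ and hence polynomial in $R$. Since the coefficients $\beta,\sigma,\rho$ and their spatial derivatives are bounded uniformly (Assumption \ref{assumption regularitycoeff}), the standard variational SDE for $\nabla Y_{0,t}$ is linear with uniformly bounded coefficients, and Kunita-type moment estimates give
\[
\E\sup_{t\in[0,T]}\sup_{y\in B_{2R}}|\nabla Y_{0,t}(y)|^q\leq C(q,K,T,d),
\]
independent of $R$. Taking $q>d$ and using Sobolev/Hölder embedding on $B_{2R}$ gives an $R$-independent Hölder seminorm bound for $Y_{0,\cdot}$ on $B_{2R}$ in $L_q$, whence for $m$ chosen large enough the interpolation error between any $y\in B_{2R}$ and its nearest net point is $o(R^\varepsilon)$ except on an event of probability at most $R^{-100}$. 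Applying the pointwise Bernstein bound at each $y\in\Lambda$ with $A=(2\nu/5)R^\varepsilon$ and using a union bound gives
\[
\bar P\Big(\sup_{t\leq T,\,y\in\Lambda}|Y_{0,t}(y)-y|\geq(2\nu/5)R^\varepsilon\Big)\leq O(R^{(m+1)d})\exp\bigl(-c_2 R^{2\varepsilon}\bigr),
\]
and the polynomial prefactor is absorbed by shrinking $c_2$. The inverse-flow reduction from the first paragraph then completes the lemma, with $\delta$ depending on $c_1,\nu,\varepsilon$.

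The main obstacle is making the uniformity argument work with a tail that genuinely decays like $e^{-\delta R^{2\varepsilon}}$: since $B_{2R}$ has volume growing polynomially in $R$, the net $\Lambda$ has polynomial cardinality, and this prefactor must be absorbed by the Gaussian exponent. The mechanism is that the spatial boundedness of the coefficients in Assumption \ref{assumption regularitycoeff} makes the variational SDE for $\nabla Y$ $R$-independent, so the Hölder-modulus bounds for $Y_{0,\cdot}$ on $B_{2R}$ grow only polynomially in $R$ (in fact, for moments, not at all), which is what permits the net spacing $R^{-m}$ to be coarse enough that the union bound does not destroy the Gaussian exponent.
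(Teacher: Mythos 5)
The paper does not actually prove this lemma: it invokes it from \cite{GG_Loc}, remarking only that the generalization from $\varepsilon=1$ to general $\varepsilon\in(0,1]$ is straightforward. Your argument is therefore a self-contained reconstruction, and it follows what is surely the intended route: an exponential-martingale (Bernstein) bound giving a Gaussian tail $e^{-cA^2}$ for the displacement $\sup_{t\le T}|Y_{0,t}(y)-y|$ of a single trajectory (the drift $\beta$ and the $l_2$-norms of the diffusion rows being uniformly bounded by Assumption \ref{assumption loc regularitycoeff}), followed by a union bound over a net of polynomial cardinality, with the passage from the net to all of $B_{2R}$ controlled by moment bounds on the spatial modulus of continuity of the flow; the polynomial prefactors are swallowed by the exponent $R^{2\varepsilon}$. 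This is sound.

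Two points need tightening. First, in the reduction of the inverse-flow event to a forward displacement, your claim that far-away preimages can be excluded ``except on an event of probability at most $e^{-R}$'' is too weak: for $\varepsilon>1/2$ one has $R=o(R^{2\varepsilon})$, so $e^{-R}$ is \emph{not} dominated by $Ne^{-\delta R^{2\varepsilon}}$. The fix is easy: either observe that a point $y\notin B_{2R}$ with $Y_{0,t}(y)\in B_R$ has displacement at least $|y|-R$, so your own Bernstein bound summed over dyadic annuli gives a tail $e^{-cR^2}$, which \emph{is} dominated; or, more cleanly, avoid the issue altogether by noting that $(t,x)\mapsto |Y_{0,t}^{-1}(x)|$ is continuous on the connected set $\bB_{R-\nu R^\varepsilon}$ and equals $|x|\le R-\nu R^\varepsilon$ at $t=0$, so if its supremum exceeds $R-(\nu/2)R^\varepsilon$ it attains that value, i.e.\ a bad starting point $y$ can always be found with $|y|=R-(\nu/2)R^\varepsilon$, hence in $B_{2R}$ surely. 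Second, the asserted $R$-independence of $\E\sup_{t\le T}\sup_{y\in B_{2R}}|\nabla Y_{0,t}(y)|^q$ is not literally correct: the pointwise moments are $R$-independent, but putting the supremum over $B_{2R}$ inside the expectation via Sobolev embedding costs a factor of order $R^{d/q}$ from the volume of the ball. This is harmless—any polynomial growth in $R$ is absorbed exactly as the cardinality of the net is—but the justification should be stated that way rather than as exact $R$-independence.
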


\emph{Proof of Theorem \ref{thm:localization}}
We may and will assume that the coefficients are smooth enough so that Assumption \ref{assumption regularitycoeff} is satisfied. Indeed, if the estimate is obtained for such smoothed coefficients, the passage to the limit is justified by \cite{K_Lp} (for $u$) and 
by \cite{Kim_Lp} (for $u^R$). The constant $N$ may change from line to line, but always has the dependence specified in the theorem.

Let $Y$ be as above, $\eta$ and $U$ as in \eqref{eq:rho}-\eqref{eq:U}, 
and $\gamma^R$ as in \eqref{eq:gamma}, with $D=B_R$. By Theorem \ref{thm:rep} 
and recalling the representation on the whole space by \cite{K_Kindof} we have that
\begin{align*}
u_t(x)&=\E^{\hat P}\Big((\psi\eta_t)(Y_{0,t}^{-1}(x))+{U}_t(Y_{0,t}^{-1}(x))\Big),\\
{u}^R_t(x)&=\E^{\hat P}\Big((\psi\eta_t)(Y_{0,t}^{-1}(x))\mathbf{1}_{\gamma^R_{t,x}=0}+( U_t-U_{\gamma_{t,x}} \tfrac{\eta_t}{\eta_{\gamma^R_{t,x}}})(Y_{0,t}^{-1}(x))\Big).
\end{align*}
Take a parameter $\bar p\geq 1$, with which we will eventually tend to infinity. 
Denote the quantities under the $\E^{\hat P}$ sign by $\cU_t(x)$ and $\cU_t^R(x)$, respectively, 
the norm in $L_\infty([0,T],L_{\bar p}(B_{R-\nu R^\varepsilon}))$ 
by $\|\cdot\|_{(R)}$, and note that on the complement of $H_R$, $\cU_t(x)=\cU_t^R(x)$ for all $(t,x)\in\bB_{R-\nu R^\varepsilon}$. By Minkowski and H\"older inequalities and Lemma \ref{lem: exit prob}
\begin{align}
\E\|u-{u}^R\|_{(R)}&=\E\|\E^{\hat P}(\cU-\cU^R)\|_{(R)}\nonumber\\
&\leq
\E\E^{\hat P}\|\cU-\cU^R\|_{(R)}\nonumber\\
&=
\E\bone_{H_R}\|\cU-\cU^R\|_{(R)}\nonumber\\
&\leq
(\bar P(H_R))^{1/q'}\E^{1/q}\|\cU-\cU^R\|_{(R)}^q\nonumber\\
&\leq
Ne^{-\delta R^{2\varepsilon}}
(\E^{1/q}\|\cU\|^q_{(R)}+
\E^{1/q}\|\cU^R\|^q_{(R)}),\label{eq:10}
\end{align}
where $q\in(1,\infty)$ and $q'=q/(q-1)$. 
At this stage we can make use of the fact, 
see again \cite{K_Kindof}, that $\cU$ is in fact 
a solution of the fully degenerate SPDE
\begin{equation}		\label{eq:00}			
d\cU=[L\cU+ f]\,dt+[M^k\cU+ g^k]\,dw^k_t
+\rho^{ir}D_i\cU\,d\hat w^r_t\quad  \text{on } [0,T]\times \R^d,\quad \cU(0)=\psi.
\end{equation}
By elementary inequalities, Sobolev's embedding, 
and \eqref{eq:seged degen}, we have
\begin{align} 
\E\|\cU\|^q_{(R)}&\leq N(2R)^{dq/\bar p}\E\|\cU\|^q_{L_\infty(\bB_R)} 
\nonumber\\
&\leq N(2R)^{dq/\bar p}\E\|\cU\|^q_{L_\infty([0,T]\times\R^d)}
\nonumber\\
& \leq N(2R)^{dq/\bar p}\E\|\cU\|^q_{L_\infty([0,T],W^1_p)}
\nonumber\\
&\leq N(2R)^{dq/\bar p}\E\cK_{1,p}(\psi,f,g)^q=:\cE^q.\label{eq:01}
\end{align}
As for $\cU^R$, let us write
$$
\E\|\cU^R\|^q_{(R)}
\leq
3^{q-1}
(\E\|\cV^1\|^q_{(R)}+\E\|\cV^2\|^q_{(R)}+\E\|\cV^3\cV^4\cV^5\|^q_{(R)}),
$$
where
\begin{align*}
\cV^1_t(x)&=(\psi\eta_t)(Y_{0,t}^{-1}(x)),
&\quad
\cV^2_t(x)&=U_t(Y_{0,t}^{-1}(x)),
\\
\cV^3_t(x)&=U_{\gamma^R_{t,x}}(Y_{0,t}^{-1}(x)),
&\quad
\cV^4_t(x)&=\eta_t(Y_{0,t}^{-1}(x)),
&\quad
\cV^5_t(x)&=\eta^{-1}_{\gamma^R_{t,x}}(Y_{0,t}^{-1}(x)).
\end{align*}
Applying again the representations on the whole space, 
we have that $\cV^1$, $\cV^2$, and $\cV^4$ 
are solutions of equations of type \eqref{eq:00}, 
with the data $(\psi, f, g)$ replaced by $(\psi,0,0)$, $(0, f,g)$, 
and $(1,0,0)$, respectively. 
Hence \eqref{eq:seged degen} yields estimates of type \eqref{eq:01} 
for $\cV^1$ and $\cV^2$. 
 One can also verify by direct calculation (see e.g. \cite{GG_FinDiff}) that the field $(t,x)\mapsto(\cV^4_t(x))/(1+|x|^2)$ is also a solution of an equation of type \eqref{eq:00}, with data $(1/(1+|\cdot|^2),0,0)$, which therefore satisfies Assumption \ref{assumption loc regularitydata}. Applying \eqref{eq:seged degen}, we then get
\begin{equation}\label{eq:02}
\E^{1/q'}\|\cV^4\|^{q'}_{L_\infty(\bB_R)}\leq N 
R^2\E^{1/q'}\|\cV^4/(1+|\cdot|^2)\|_{L_{\infty}([0,T],W^1_{p}(B_R))}^{q'}\leq NR.
\end{equation}
Hence,
$$
\E\|\cU^R\|^q_{(R)}\leq \cE + NR^2\E^{1/q}\|\cV^3\cV^5\|^{q^2}_{(R)}.
$$
Next, we can write 
$$
|\cV^5(x)|=|\eta_{\gamma_{t,x}^R}^{-1}(Y_{0,\gamma_{t,x}^R}^{-1}(Y_{\gamma_{t,x}^R,t}^{-1}(x)))|\leq\sup_{(s,y)\in\bB_R}|\eta^{-1}_s(Y_{0,s}^{-1}(y))|.
$$
By It\^o's formula, $\eta^{-1}$ is the solution of an SDE of the same type as $\eta$, in fact its differential was already given in the proof of Theorem \ref{thm:rep}, see \eqref{eq:eta tilde}. Hence, the field $(\eta^{-1}_s(Y_{0,s}^{-1}(y)))_{s\in[0,T],y\in\R^d}$ is again a solution of an equation of type \eqref{eq:00}, with the data $(\psi,f, g)$ replaced by $(1,0,0)$, and the zero order coefficients $(c,\mu)$ replaced by $(-c+|\mu|^2,-\mu)$. Hence we can estimate its supremum norm as in \eqref{eq:02}, and we can write
$$
\E\|\cU^R\|^q_{(R)}\leq \cE + NR^4\E^{1/q^2}\|\cV^3\|^{q^3}_{(R)}.
$$
Finally, $\cV^3$ can be treated similarly:
$$
|\cV^3(x)|=|U_{\gamma_{t,x}^R}(Y_{0,\gamma_{t,x}^R}^{-1}(Y_{\gamma_{t,x}^R,t}^{-1}(x)))|\leq\sup_{(s,y)\in\bB_R}|U_s(Y_{0,s}^{-1}(y))|.
$$
One can recognize the right-hand-side as $\|\cV^2\|_{\infty,\bB_R}$, which is estimated as in \eqref{eq:01}.
We can therefore conclude 
\begin{align*}
\E^{1/q}\|\cU^R\|^q_{(R)}&\leq \cE + N(2R)^{4+d/\bar p}\E^{1/q^3}\|\cV^2\|_{L_\infty(\bB_R)}^{q^3}
\\
&\leq N(2R)^{4+d/\bar p}\E^{1/q^3}\cK^{q^3}_{m,p}(\psi,f,g).
\end{align*}
Together with \eqref{eq:01} and \eqref{eq:10} this yields, for $\bar p\geq 1$, 
$$
\E\|u-{u}^R\|_{L_\infty([0,T],L_{\bar p}(B_{R-\nu R^\varepsilon})}\leq Ne^{-\delta R^{2\varepsilon}}R^{4+d}\E^{1/q^3}\cK^{q^3}_{m,p}(\psi,f,g),
$$
and since the right-hand side doesn't depend on $\bar p$, we can take the limit $\bar p\rightarrow \infty$ by Fatou's lemma. This yields \eqref{eq:loc estimate linear}, keeping in mind that $q\in(1,\infty)$ was arbitrary and that $R^{4+d}\leq Ne^{\delta'R^{2\varepsilon}}$ for any $\delta'>0$.
\qed

\bibliography{representation}{}

\begin{thebibliography}{GGK14}
\expandafter\ifx\csname url\endcsname\relax
  \def\url#1{\texttt{#1}}\fi
\expandafter\ifx\csname urlprefix\endcsname\relax\def\urlprefix{URL }\fi
\expandafter\ifx\csname href\endcsname\relax
  \def\href#1#2{#2}\fi
\expandafter\ifx\csname burlalt\endcsname\relax
  \def\burlalt#1#2{\href{#2}{\texttt{#1}}}\fi

\bibitem[FS90]{FlanSchaum}
\textsc{F.~Flandoli} and \textsc{K.~U. Schauml\"offel}.
\newblock Stochastic parabolic equations in bounded domains: random evolution
  operator and {L}yapunov exponents.
\newblock \emph{Stochastics and Stochastic Reports} \textbf{29}, no.~4, (1990),
  461--485.
\newblock
  \burlalt{doi:10.1080/17442509008833628}{http://dx.doi.org/10.1080/17442509008833628}.

\bibitem[GG14]{GG_FinDiff}
\textsc{M.~Gerencs\'er} and \textsc{I.~Gy\"ongy}.
\newblock Finite difference schemes for stochastic partial differential
  equations in {S}obolev spaces.
\newblock \emph{Applied Mathematics \& Optimization} \textbf{72}, no.~1,
  (2014), 77--100.
\newblock
  \burlalt{doi:10.1007/s00245-014-9272-2}{http://dx.doi.org/10.1007/s00245-014-9272-2}.

\bibitem[GG16]{GG_Loc}
\textsc{M.~Gerencs\'er} and \textsc{I.~Gy\"ongy}.
\newblock Localization errors in solving stochastic partial differential
  equations in the whole space.
\newblock \emph{Mathematics of Computation} \textbf{To appear}(2016).

\bibitem[GGK14]{GGK14}
\textsc{M.~Gerencs\'er}, \textsc{I.~Gy\"ongy}, and \textsc{N.~V. Krylov}.
\newblock On the solvability of degenerate stochastic partial differential
  equations in {S}obolev spaces.
\newblock \emph{Stochastic Partial Differential Equations: Analysis and
  Computations} \textbf{3}, no.~1, (2014), 52--83.
\newblock
  \burlalt{doi:10.1007/s40072-014-0042-6}{http://dx.doi.org/10.1007/s40072-014-0042-6}.

\bibitem[GT83]{GilbargT}
\textsc{D.~Gilbarg} and \textsc{N.~S. Trudinger}.
\newblock \emph{Elliptic Partial Differential Equations of Second Order}.
\newblock Springer Nature, 1983.
\newblock
  \burlalt{doi:10.1007/978-3-642-61798-0}{http://dx.doi.org/10.1007/978-3-642-61798-0}.

\bibitem[Kim04]{Kim_Lp}
\textsc{K.-H. Kim}.
\newblock On stochastic partial differential equations with variable
  coefficients in {$C^1$} domains.
\newblock \emph{Stochastic Processes and their Applications} \textbf{112},
  no.~2, (2004), 261 -- 283.
\newblock
  \burlalt{doi:10.1016/j.spa.2004.02.006}{http://dx.doi.org/10.1016/j.spa.2004.02.006}.

\bibitem[KR86]{KR_Char}
\textsc{N.~V. Krylov} and \textsc{B.~L. Rozovskii}.
\newblock Characteristics of degenerating second-order parabolic {I}to
  equations.
\newblock \emph{Journal of Soviet Mathematics} \textbf{32}, no.~4, (1986),
  336--348.
\newblock
  \burlalt{doi:10.1007/BF01095048}{http://dx.doi.org/10.1007/BF01095048}.

\bibitem[Kry80]{K_Controlled}
\textsc{N.~V. Krylov}.
\newblock \emph{Controlled Diffusion Processes}.
\newblock Springer Nature, 1980.
\newblock
  \burlalt{doi:10.1007/978-1-4612-6051-6}{http://dx.doi.org/10.1007/978-1-4612-6051-6}.

\bibitem[Kry92]{K_Kindof}
\textsc{N.~V. Krylov}.
\newblock \emph{On explicit formulas for solutions of evolutionary SPDE's (a
  kind of introduction to the theory)},  153--164.
\newblock Springer Berlin Heidelberg, Berlin, Heidelberg, 1992.
\newblock
  \burlalt{doi:10.1007/BFb0007330}{http://dx.doi.org/10.1007/BFb0007330}.

\bibitem[Kry94]{K_W2m}
\textsc{N.~V. Krylov}.
\newblock A {$W^2_n$} -theory of the {D}irichlet problem for {SPDE}s in general
  smooth domains.
\newblock \emph{Probability Theory and Related Fields} \textbf{98}, no.~3,
  (1994), 389--421.
\newblock
  \burlalt{doi:10.1007/BF01192260}{http://dx.doi.org/10.1007/BF01192260}.

\bibitem[Kry99]{K_Lp}
\textsc{N.~Krylov}.
\newblock An analytic approach to {SPDE}s.
\newblock In \emph{Mathematical Surveys and Monographs},  185--242. American
  Mathematical Society ({AMS}), 1999.
\newblock
  \burlalt{doi:10.1090/surv/064/05}{http://dx.doi.org/10.1090/surv/064/05}.

\bibitem[Kun84]{Kunita_StFleur}
\textsc{H.~Kunita}.
\newblock \emph{Stochastic differential equations and stochastic flows of
  diffeomorphisms},  143--303.
\newblock Springer Berlin Heidelberg, Berlin, Heidelberg, 1984.
\newblock
  \burlalt{doi:10.1007/BFb0099433}{http://dx.doi.org/10.1007/BFb0099433}.

\bibitem[Kun97]{Kunita_Book97}
\textsc{H.~Kunita}.
\newblock \emph{Stochastic Flows and Stochastic Differential Equations}.
\newblock Cambridge Studies in Advanced Mathematics. Cambridge University
  Press, 1997.

\bibitem[LM15]{LM_Flows}
\textsc{J.-M. Leahy} and \textsc{R.~Mikulevi{\v{c}}ius}.
\newblock On some properties of space inverses of stochastic flows.
\newblock \emph{Stochastic Partial Differential Equations: Analysis and
  Computations} \textbf{3}, no.~4, (2015), 445--478.
\newblock
  \burlalt{doi:10.1007/s40072-015-0056-8}{http://dx.doi.org/10.1007/s40072-015-0056-8}.

\end{thebibliography}
\bibliographystyle{Martin} 
\end{document}